\newtheorem{theorem}{Theorem}
\newtheorem{lemma}{Lemma}
\newtheorem{proposition}{Proposition}
\newtheorem{corollary}{Corollary}
\newcommand\proofof[1]{{\medskip\noindent\em Proof of #1.\quad}}
\def\eproof{{\mbox{}\hfill\qed}\medskip}
\def\R{{\mathbb{R}}}
\def\N{{\mathbb{N}}}
\def\Prob{\mathop{\mathsf{Prob}}}
\def\bE{\mathop{\mathbb{E}}}
\def\Oh{{{\cal O}}}
\def\cT{{{\cal T}}}
\def\DD{{{\cal D}}}
\def\cS{{\cal S}}
\def\cdet{{\boldsymbol c}^{\mathsf{det}}}
\def\ci{{\boldsymbol c}^{\dagger}}
\def\ckl{{\boldsymbol c}^{\dagger}_{kl}}
\def\ck{{\boldsymbol c}_{k}}
\def\c{{\boldsymbol c}}
\def\MM{{\mathscr M}}
\def\Id{\mathsf{Id}}
\def\RelError{\mathsf{RelError}}
\def\LoP{\mathsf{LoP}}
\def\cond{\mathsf{cond}}
\def\emac{\varepsilon_{\mathsf{mach}}}
\def\BAMS{Bulletin of the Amer. Math. Soc.}
\begin{document}

\begin{title}
 {\LARGE \bf Smoothed analysis of componentwise condition numbers for 
sparse matrices}
\end{title}
\author{Dennis Cheung\\
United International College\\
Tang Jia Wan\\
Zhuhai, Guandong Province\\
P.R. of CHINA\\
e-mail:
{\tt dennisc@uic.edu.hk}
\and
Felipe Cucker
\thanks{Partially supported by 
GRF grant CityU 100808}
\\
Department of Mathematics\\
City University of Hong Kong\\
83 Tat Chee Avenue, Kowloon\\
HONG KONG\\
e-mail:
{\tt macucker@cityu.edu.hk}
}

\date{}

\maketitle

\begin{quote}
{\footnotesize {\bf Abstract.\quad}
We perform a smoothed analysis of the componentwise condition numbers 
for determinant computation, matrix inversion, and linear equations solving
for sparse $n\times n$ matrices.  
The bounds we obtain for the expectations of the logarithm of these  
condition numbers are, in all three cases, of the order $\Oh(\log n)$. 
As a consequence, small bounds on the smoothed 
loss of accuracy for triangular linear systems follow.  
}
\end{quote}

\section{Introduction}

The most commonly used solver of linear systems of equations, 
Gaussian elimination, reduces the input system $Ax=c$ to a system 
$Lx=b$ with $L$ lower triangular (and same solution $x$). 
Then, it solves the latter by forward substitution. As a consequence, 
triangular systems of equations are routinely solved by computer.

Almost on every occasion, the accuracy of the computed solution 
is very high. Yet, the reasons for this accuracy have been dodging 
researchers for quite a while. In the early 1960s 
J.H.~Wilkinson noted that ``In practice one almost invariably finds 
that if $L$ is ill-conditioned, so that $\|L\|\|L^{-1}\|\gg1$, then the 
computed solution of $Lx=b$ (or the computed inverse) is far more accurate 
than [what forward stability analysis] would suggest'' ~\cite[p.~105]{Wilkinson63}. 
To make things worse, ill-conditioned matrices $L$ in the sense above, 
appeared to be ubiquitous. This was explained by 
by D.~Viswanath and N.~Trefethen in~\cite{ViTr98}. 
Indeed, if $L_n$ denotes a 
random triangular $n\times n$ matrix (whose entries are independent 
standard Gaussian random variables) and 
$\kappa_n=\|L_n\|\|L_n^{-1}\|$ is its condition number (which is a 
positive random variable) then, the main result in~\cite{ViTr98} 
shows that
$$
   \sqrt[n]{\kappa_n}\to 2 \quad\mbox{almost surely}
$$ 
as $n\to\infty$. A straightforward consequence of this result is that 
the expected value of $\log\kappa_n$ satisfies 
$\bE(\log\kappa_n)=\Omega(n)$. 

Putting all the above together we can describe the situation as follows:
\begin{quote}
{\sl Triangular systems of equations are generally solved to high accuracy 
in spite of being, in general, ill-conditioned.} 
\end{quote}

In 1989 N.~Higham~\cite{Higham:89} pointed out that the backward error 
analysis given by Wilkinson for forward substitution 
yields (small) {\em  componentwise} bounds on the perturbated 
matrix. One can therefore deduce small forward error bounds 
for these solutions if the {\em componentwise condition number} 
$\c(L,b)$ of the 
pair $(L,b)$ ---instead of $\kappa(L)$--- is small. In a recent 
paper~\cite{ChC08} we showed that this is the case for random 
triangular matrices $L$. Here `random' means that the entries 
of $L$ are i.i.d.~standard random variables. This result provides
an explanation of the high accuracy achieved in general by 
forward substitution. 

In the last decade, however, the suitability of 
this {\em average analysis} to reflect performance of algorithmic practice 
was questioned.  The objection raised is that the probability 
distribution underlying these analyses ---usually, a centered 
isotropic Gaussian--- is chosen because of technical reasons and not 
because it models ``the real world.'' Because of this, it may well happen 
that the resulting estimates are too optimistic, just as worst-case analysis 
is often claimed to be too pessimistic. The proposed alternative, 
{\em smoothed analysis}, interpolates between worst-case and average 
analyses and typically studies, for a function $f:\R^p\to\R$, the 
quantity
$$
            \sup_{\bar{a}\in\R^p}\bE_{a\sim N(\bar{a},\sigma^2\Id)} f(a).
$$
Here $N(\bar{a},\sigma^2\Id)$ denotes the normal 
distribution centered at 
$\bar{a}$ and with covariance matrix $\sigma^2\Id$, 
where $\Id$ is the identity matrix. In case $f$ 
is homogeneous (i.e., $f(\lambda a)=f(a)$ for all $\lambda\neq 0$)
it is common to scale the covariance matrix and study 
$$
     \sup_{\bar{a}\in\R^p}\bE_{a\sim N(\bar{a},\sigma^2\|\bar{a}\|\Id)} f(a).
$$
or, equivalently, 
$$
            \sup_{\|\bar{a}\|=1}\bE_{a\sim N(\bar{a},\sigma^2\Id)} f(a).
$$
In this case, the interpolation 
mentioned above comes from the fact that when $\sigma=0$ the 
expression above reduces to the worst-case of $f$ and when 
$\sigma\to\infty$ one approaches the usual average analysis. 
We won't elaborate here on the virtues of smoothed analysis. 
The interested reader can find expositions of these virtues 
in~\cite{ST:02,ST:04,ST:06,ST:09} or~\cite[\S2.2.7]{Condition}. 
We will instead proceed to 
state the main results of this paper. 
For a matrix $A$ we define the max norm 
$$
  \|A\|_{\max}=\max_{ij}|a_{ij}|.
$$

\begin{theorem}\label{thm:LES} 
Let $\cT$ denote the set of $n\times n$ lower triangular matrices. 
Let $\bar{L}\in\cT$ and $\bar{b}\in\R^n$ be such that 
$\|\bar{L}\|_{\max}\leq 1$ and $\|\bar{b}\|_\infty\leq 1$. 
For $L\in\cT$ and  $b\in\R^n$ let $\c(L,b)$ denote the componentwise 
condition number, for the problem of linear equation solving, 
of the pair $(L,b)$. Then, for any real number $t>n(n+1)$ we have
\begin{equation*}
\Prob_{(L,b)\sim N_{\cT}((\bar{L},\bar{b}),\sigma^2\Id)}
\{\c(L,b)>t\} \,\leq\,\left(\frac{1+\sigma}{\sigma}\right)
\left(\frac{n^3(n+1)^2}{t-n(n+1)}\right)\sqrt\frac{2}{\pi}
\end{equation*}
and, for any $\beta>1$,
\begin{equation*}
\bE_{(L,b)\sim N_{\cT}((\bar{L},\bar{b}),\sigma^2\Id)}(\log_\beta(\c(L,b)))
\,\leq \,\log_\beta\left(\frac{1+\sigma}{\sigma}\right)+5\log_\beta(n)
+\frac{2.65}{\ln\beta}.
\end{equation*}
The subindex $\cT$ in $N_{\cT}((\bar{L},\bar{b}),\sigma^2\Id)$ 
is meant to denote that $L$ is triangular. That is, 
the only entries of $L$ which are drawn 
from the Gaussian $N((\bar{L},\bar{b}),\sigma^2\Id)$ are those 
in its lower part.
\end{theorem}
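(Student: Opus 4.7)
The plan is to separate the argument into two essentially independent pieces: (i) a deterministic upper bound for $\c(L,b)$ in terms of a few elementary quantities, namely the max-norms of $L$ and $b$ together with the moduli of the diagonal entries $\ell_{ii}$; and (ii) smoothed estimates controlling the probability that those quantities take bad values. Starting from the standard expression
\begin{equation*}
\c(L,b) \;=\; \frac{\bigl\||L^{-1}|(|L||x|+|b|)\bigr\|_\infty}{\|x\|_\infty},
\qquad x=L^{-1}b,
\end{equation*}
I would expand the forward substitution recurrence row by row: each entry of $x$ and of $L^{-1}$ is a rational function of the entries of $L$ whose denominator is a product of diagonal entries $\ell_{jj}$. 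Using $\|\bar{L}\|_{\max}\le 1$ and $\|\bar{b}\|_\infty\le 1$, this should produce a deterministic inequality of the shape
\begin{equation*}
\c(L,b) \;\le\; A(n)\cdot\frac{P\bigl(\|L\|_{\max},\|b\|_\infty\bigr)}{\min_i|\ell_{ii}|^{\alpha(n)}},
\end{equation*}
with $A(n),\alpha(n)$ polynomial in $n$ and $P$ an explicit polynomial whose degrees determine the $n(n+1)$ floor appearing in the theorem.

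For step~(ii) both ingredients are standard. On one hand, the Gaussian density is bounded by $1/(\sigma\sqrt{2\pi})$ regardless of its mean, so for each $i$,
\begin{equation*}
\Prob\{|\ell_{ii}|<\varepsilon\}\;\le\;\frac{2\varepsilon}{\sigma\sqrt{2\pi}},
\end{equation*}
and a union bound over the $n$ diagonal entries gives a ``small denominator'' estimate of order $n\varepsilon/\sigma$. This is the origin of the $\sqrt{2/\pi}$ factor and of the $1/\sigma$ in the stated bound. On the other hand, all entries of $L$ and $b$ are Gaussians with mean in $[-1,1]$ and standard deviation $\sigma$, so standard tail bounds on maxima of Gaussians control $\|L\|_{\max}$ and $\|b\|_\infty$ up to a factor proportional to $1+\sigma$; this is where the numerator $1+\sigma$ comes from. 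Combining the two events through the deterministic inequality of step~(i) and optimizing the threshold between ``denominator too small'' and ``numerator too large'' should produce the tail $\frac{1+\sigma}{\sigma}\cdot\frac{\mathrm{poly}(n)}{t-n(n+1)}\sqrt{2/\pi}$ claimed.

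For the expectation bound I would use the layer-cake identity
\begin{equation*}
\bE(\log_\beta\c(L,b))\;=\;\int_0^\infty\Prob\{\c(L,b)>\beta^s\}\,ds,
\end{equation*}
split the integral at a threshold $s_0$ large enough that $\beta^{s_0}$ dominates the floor $n(n+1)$, and bound the two pieces separately: the part below $s_0$ contributes essentially $s_0\approx 5\log_\beta n$ (tracking the factor $n^3(n+1)^2\asymp n^5$ in the tail bound), while the tail above $s_0$ is controlled by the tail estimate via the change of variables $u=\beta^s$, $du=u\ln\beta\,ds$, producing the $1/\ln\beta$ denominator together with the $\log_\beta((1+\sigma)/\sigma)$ summand. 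The additive $2.65/\ln\beta$ constant should come from the numerical evaluation of the resulting convergent logarithmic integral together with $\sqrt{2/\pi}$.

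The hardest part of the whole plan is step~(i). Forward substitution naturally creates products of $n$ off-diagonal entries, and the absolute values in $|L^{-1}||L||x|$ risk destroying the cancellation that normally keeps the solution itself bounded, so without care the deterministic bound on $\c(L,b)$ could come out singly exponential in $n$ rather than polynomial. Overcoming this probably requires pairing each factor $1/|\ell_{ii}|$ coming from $|L^{-1}|$ against a matching $|\ell_{ii}|$ coming from $|L|$ row by row, rather than bounding $|L^{-1}|$ and $|L|$ separately.
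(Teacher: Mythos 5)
There is a genuine gap, and it sits exactly where you flagged the difficulty: step~(i) as stated is false. No deterministic bound of the form $\c(L,b)\le A(n)\,P(\|L\|_{\max},\|b\|_\infty)/\min_i|\ell_{ii}|^{\alpha(n)}$ with $A(n),\alpha(n)$ polynomial can hold. Take $L$ unit lower triangular with $\ell_{ij}=-1$ for $i>j$, so that $\|L\|_{\max}=1$ and $\min_i|\ell_{ii}|=1$: the entries of $L^{-1}$ grow like $2^{n-2}$, and for suitable $b$ with $\|b\|_\infty\le 1$ the solution component $x_k$ suffers cancellation while the partial derivatives $\partial x_k/\partial b_i=(L^{-1})_{ki}$ do not, making $\c(L,b)$ exponential in $n$. (Indeed, if a polynomial bound conditional only on $\min_i|\ell_{ii}|$ and $\|L\|_{\max}$ were available, the smoothed analysis of this paper would be essentially unnecessary.) The bad event is not ``a diagonal entry is small'' or ``an entry is large''; it is cancellation in a signed cofactor expansion, and that is precisely what your union bound and Gaussian maxima estimates cannot see. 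Consequently the provenance you assign to the factors $\sqrt{2/\pi}$ and $(1+\sigma)/\sigma$ is also not the right one.

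The paper's route avoids any deterministic worst-case bound. Theorem~\ref{thm:LES} is read off from a general sparse-matrix result (Theorem~\ref{result3} with $S=\{(i,j):i\ge j\}$, $|S|=n(n+1)/2$), which in turn rests on two reductions: by Cramer's rule, $\ck(L,b)\le \cdet(L)+\cdet(R_k)$ where $R_k$ is $L$ with its $k$th column replaced by $b$ (Lemma~\ref{l7}); and $\cdet(A)=\sum_{i,j}|a_{ij}\gamma_{ji}|$ with $a_{ij}\gamma_{ji}=a_{ij}\det(A_{ji})/\det(A)$. Expanding $\det(A)$ along the relevant column and conditioning on all other entries turns each such ratio into $X/(X+1)$ for a one-dimensional Gaussian $X$, and the whole theorem reduces to the anti-concentration estimate $\Prob\{|X|>t|X+1|\}\le\frac{|\mu|+\varsigma}{\varsigma}\cdot\frac{1}{t-1}\sqrt{2/\pi}$ (Proposition~\ref{l4}), whose uniformity in the conditioning is what makes the smoothed bound work; this is also where $\sqrt{2/\pi}$ and $(1+\sigma)/\sigma$ actually come from. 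Your layer-cake treatment of the expectation is essentially the paper's Proposition~\ref{prop1} and is fine, but it cannot be salvaged without a correct tail bound, which requires an idea of this anti-concentration type rather than the two events you propose to combine.
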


This theorem has immediate consequences for the accuracy 
of forward substitution. Recall (or look at the {\em Overture} chapter 
in~\cite{Condition} for a primer if you are not familiar with 
round-off analysis), a finite precision algorithm with 
{\em machine precision} $\emac$ rounds-off all the real numbers $z$ 
occuring in the execution to a rational (floating point) number 
$\tilde z$ satisfying 
$$
    \RelError(z):=\frac{|\tilde{z}-z|}{|z|}\leq \emac
$$ 
(we agree this equality to hold if $z=\tilde{z}=0$). This means that 
the approximation $\tilde{z}$ has $\log_{10}(\frac{1}{\emac})$ 
correct (significant) digits\footnote{All our discussion holds as well for 
bits, instead of digits. The modifications required are trivial.}. 

If we solve a system $Lx=b$ with a finite precision machine we 
obtain an approximation $\tilde{x}$ of the solution $x$. A (componentwise) 
extension of the notion above measures the relative error of this approximation 
by 
$$
  \RelError(x):=\max_{i\leq n}\RelError(x_i).
$$
Again, $\log_{10}(\RelError^{-1}(x))$ provides a lower bound on the 
number of correct digits for {\em all} the components of $x$ and hence 
the {\em loss of precision} in the computation of $x$ ---i.e., the initial 
precision of our data measured in number of correct digits minus the precision 
of the computed outcome measured in the same manner--- is  
$$
   \LoP(x):=\log_{10}(\emac^{-1})-\log_{10}(\RelError^{-1}(x)).
$$
Note that if $L$ is singular then $x$ is not well-defined or may not exist. 
In this case we take, by convention, $\LoP(x)=\infty$.  
The following result provides a smoothed analysis of this 
quantity for forward substitution with finite precision.

\begin{corollary}\label{cor:LES} 
Assume we solve systems $Lx=b$ using forward substitution. Then, 
for all $\bar{L}\in\cT$ and $\bar{b}\in\R^n$ with  
$\|\bar{L}\|_{\max}\leq 1$ and $\|\bar{b}\|_\infty\leq 1$ 
we have
\begin{equation*}
\bE(\LoP(x)) 
= \log_{10}\left(\frac{1+\sigma}{\sigma}\right)+5\log_{10} n 
+ \log_{10}(\log_2 n)+ 1.452+o(1).
\end{equation*}
Here $(L,b)\sim N_{\cT}((\bar{L},\bar{b}),\sigma^2\Id)$
and $o(1)$ is a quantity that tends to zero with $\emac$.
\end{corollary}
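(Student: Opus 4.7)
The plan is to chain the standard componentwise backward error analysis of forward substitution with the expectation bound already supplied by Theorem~\ref{thm:LES}; the bulk of the work is in the latter. First I would invoke Higham's refinement~\cite{Higham:89} of Wilkinson's analysis: the solution $\tilde x$ computed by forward substitution in floating-point with unit roundoff $\emac$ satisfies $(L+\Delta L)\tilde x = b$ for some perturbation with $|\Delta L_{ij}|\leq h(n)\,\emac\,|L_{ij}|$ componentwise, where for the appropriate implementation one may take $h(n)=2\log_2 n + O(\emac)$.

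Feeding this componentwise perturbation into the definition of $\c(L,b)$ --- which is exactly the first order amplification factor from componentwise relative perturbations of the data to the componentwise relative error in the solution --- gives
\begin{equation*}
\RelError(x) \;\leq\; h(n)\,\emac\,\c(L,b) + O(\emac^2).
\end{equation*}
Taking $\log_{10}$ and using the definition $\LoP(x)=\log_{10}(\emac^{-1})+\log_{10}(\RelError(x))$, the $\emac$ factor cancels and, after absorbing the $O(\emac^2)$ term into an $o(1)$ as $\emac\to 0$, one obtains
\begin{equation*}
\LoP(x)\;\leq\;\log_{10}(\c(L,b)) + \log_{10}(h(n)) + o(1).
\end{equation*}

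I would then take expectation over $(L,b)\sim N_\cT((\bar L,\bar b),\sigma^2 \Id)$ --- the measure-zero event on which $L$ has a vanishing diagonal entry (and hence $\LoP(x)=\infty$ by convention) can be ignored --- and plug in the expectation part of Theorem~\ref{thm:LES} with $\beta=10$. With $h(n)=2\log_2 n$ one has $\log_{10}(h(n))+\tfrac{2.65}{\ln 10}=\log_{10}(\log_2 n)+\log_{10}(2)+\tfrac{2.65}{\ln 10}=\log_{10}(\log_2 n)+1.452\ldots$, producing precisely the form stated in the corollary. The main obstacle is not conceptual but rather careful bookkeeping of constants: one must pin down the exact form of $h(n)$ produced by the chosen implementation of forward substitution so that the $\tfrac{2.65}{\ln 10}$ coming from Theorem~\ref{thm:LES} and the $\log_{10}(2)$ coming from $h(n)$ combine to the stated $1.452$; a different constant in $h(n)$ would change both the $\log_{10}(\log_2 n)$ term and the additive constant in a predictable way.
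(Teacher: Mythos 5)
Your proposal is correct and follows essentially the same route as the paper: it combines Wilkinson's componentwise backward error bound for forward substitution (the paper's Proposition~\ref{prop:back}, with $B=2\log_2 n$) with the general rule $\LoP(x)\leq\log_{10}B+\log_{10}\c(L,b)+o(1)$, then takes expectations using Theorem~\ref{thm:LES} with $\beta=10$, and your constant bookkeeping $\log_{10}2+\tfrac{2.65}{\ln 10}\approx 1.452$ matches the paper exactly. The only cosmetic discrepancy is that both your argument and the paper's actually yield an upper bound, whereas the corollary is stated with an equality.
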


\section{Preliminaries}

\subsection{Componentwise condition numbers}

Condition numbers measure the worst-case magnification 
in the computed outcome of a small perturbation 
in the data. As originally introduced by Turing~\cite{Turing48},
or von Neumann and Goldstine~\cite{vNGo47}, 
they were {\em normwise} in the sense that data perturbation and  
outcome's error were measured using norms (in the space of data and outcomes 
respectively). In contrast, {\em componentwise} condition numbers measure 
both of them componentwise. 

For both data perturbation and output error, the error is measured 
in a relative manner. Because of this, the following
form of ``distance'' function (it is not a distance as is not symmetric) 
will be useful to define componentwise condition numbers.
For points $u,v \in \R^p$ we define 
$\frac{u}{v}=(w_1,\ldots,w_p)$ with 
$$
w_i=\left\{ \begin{array}{ll}
             u_i/v_i &\mbox{if $v_i\neq 0$}\\ 
             0 &\mbox{if $u_i=v_i=0$}\\ 
             \infty &\mbox{otherwise.}
             \end{array}\right.
$$
Then we define
$$
   d(u,v):=\left\|\frac{u-v}{v}\right\|_\infty.
$$
Note that, if $d(u,v)<\infty$, 
$$
  d(u,v):=\min\{\nu \geq 0\mid |u_i-v_i|\leq\nu|v_i|
              \mbox{ for $i=1,\ldots,p$}\}.
$$
For $\delta>0$ and $a\in\R^p$ we denote
$\cS(a,\delta)=\{x\in\R^p\mid d(x,a)\leq \delta\}$. 

Let $\DD\subseteq\R^p$, $F:\DD \rightarrow \R^q$ be a continuous
mapping, and $a \in \DD$ be such that $a_j\neq 0$ for $j=1,\ldots,q$. 
Then the {\em componentwise condition number} of $F$ at $a$ is
\begin{equation}\label{def:c}
  \c^F(a):=\lim_{\delta \rightarrow 0} 
    \sup_{x \in \cS(a,\delta) \atop x \neq a}
    \frac{d(F(x),F(a))}{d(x,a)}.
\end{equation}
It is not difficult to see that 
\begin{equation}\label{char:c}
   \c^F(a)=\max_{j\leq q}\c^{F_j}(a)
\end{equation}
where $\c^{F_j}(a)$ denotes the componentwise condition number 
of $a$ for the $j$th component $F_j$ of $F$. We will systematically 
use this form in the rest of this paper.

\subsection{Sparse matrices}

In all what follows, for $n\in\N$, 
we denote the set $\{1,\ldots, n\}$ by $[n]$.

We denote by $\MM$ the set of $n\times n$ real matrices and by 
$\Sigma$ its subset of singular matrices. 
Also, for a subset $S\subseteq[n]^2$ we denote 
$$
  \MM_S=\{A\in\MM \mid \mbox{ if 
  $(i,j)\not\in S$ then $a_{ij}=0$}\}. 
$$ 
Matrices in $\MM_S$ for some $S\neq [n]^2$ 
(i.e.~matrices with a fixed pattern of zeros) are said to be {\em sparse}. 
The set $S$ is said to be {\em admissible} if $\MM_S$ contains 
some invertible matrix. 

In the rest of this paper, for non-singular matrices $A,A'$, we denote 
their inverses by $\Gamma,\Gamma'$, respectively. Also, we denote 
by $A_{ij}$ the sub-matrix of $A$
obtained by removing from $A$ its $i$th row and its $j$th column.

The technical results below, Theorems~\ref{thm:det},~\ref{result2} 
and~\ref{result3}, 
are proved in the general context of sparse matrices. Besides triangular 
matrices, these results apply to other classes of sparse matrices such
as, for instance, tridiagonal matrices. 

\subsection{Smoothed analysis}\label{s2}

Let $\sigma>0$ be a fixed number, $S\subset [n]^2$ be admissible 
and $\bar{A}=(\bar{a}_{ij})\in\MM_S$. 
Extending the notation we used in the Introduction, we will 
write $A\sim N_S(\bar{A},\sigma^2\|\bar{A}\|_{\max}\Id)$ to 
denote that the entry $a_{ij}$ of $A$, with $(i,j)\in S$, is 
a random variable with distribution 
$N(\bar{a}_{ij},\sigma^2\|\bar{A}\|_{\max})$, whereas the entries 
$a_{ij}$ with $(i,j)\not\in S$ are zero. 

In this paper we will only be concerned, for 
a random sparse matrix $A$ as above, with the 
componentwise condition number 
of $A$ with respect to a few problems. All these condition 
numbers being, as functions, homogeneous of degree 0, 
we will sistematically consider, without loss of generality, 
the center $\bar{A}$ of the 
distribution to satisfy $\|\bar{A}\|_{\max}=1$ (or, more generally 
and for convenience, $\|\bar{A}\|_{\max}\leq 1$) and therefore, 
we will take $\sigma^2\Id$ as covariance matrix in our distributions. 

\section{Preliminary results}

We prove in this section some bounds on one-dimensional Gaussian
random variables as well as a proposition on the expectation of
positive random variables.  The main results of the paper will easily
follow from them.

\begin{proposition}\label{l4}
Let $\mu$, $\varsigma>0$ and $t>1$ be fixed numbers. Let $X\sim
N(\mu,\varsigma^2)$ be a normal distributed random variable. Then
$$
\Prob\{|X|>t|X+1|\}<
   \left(\frac{|\mu|+\varsigma}{\varsigma}\right)
   \left(\frac{1}{t-1}\right)\sqrt\frac{2}{\pi}.
$$
\end{proposition}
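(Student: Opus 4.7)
The plan is to reduce the event to $X$ lying in an explicit interval and then bound the probability with a Gaussian density estimate. Squaring and factoring the strict inequality $|X|>t|X+1|$ yields $\{|X|>t|X+1|\}=\{X\in I\}$ with $I=(-\tfrac{t}{t-1},\,-\tfrac{t}{t+1})$, an interval of length $|I|=\tfrac{2t}{t^{2}-1}$. Letting $d$ denote the distance from $\mu$ to $I$, the density bound gives
\[
\Prob\{X\in I\}\;\le\;\frac{|I|}{\varsigma\sqrt{2\pi}}\,\exp\!\Bigl(-\frac{d^{2}}{2\varsigma^{2}}\Bigr),
\]
and using $2/\sqrt{2\pi}=\sqrt{2/\pi}$ the desired bound is equivalent to the one inequality
\[
\frac{t}{t+1}\,\exp\!\Bigl(-\frac{d^{2}}{2\varsigma^{2}}\Bigr)\;\le\;|\mu|+\varsigma. \qquad (\ast)
\]

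To verify $(\ast)$ I would case-split on the position of $\mu$. When $\mu\le-\tfrac{t}{t+1}$ (so $\mu$ lies in $I$ or to its left), one has $|\mu|\ge\tfrac{t}{t+1}$, so $|\mu|+\varsigma$ already dominates the left side (whose exponential factor is at most $1$). When $\mu>-\tfrac{t}{t+1}$, then $d=\mu+\tfrac{t}{t+1}>0$; writing $r=\tfrac{t}{t+1}$, the key elementary inequality is $u\,e^{-u^{2}/2}\le e^{-1/2}$ for $u\ge0$. In the sub-case $\mu\ge0$ we have $d\ge r$, so
\[
r\,e^{-d^{2}/(2\varsigma^{2})}\;\le\;r\,e^{-r^{2}/(2\varsigma^{2})}\;=\;\varsigma\cdot\tfrac{r}{\varsigma}\,e^{-(r/\varsigma)^{2}/2}\;\le\;\varsigma\,e^{-1/2}\;<\;\varsigma\;\le\;\mu+\varsigma,
\]
and $(\ast)$ follows in a single line.

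The main obstacle is the remaining sub-case $-r<\mu<0$, where $|\mu|+\varsigma=r-d+\varsigma$ and $(\ast)$ translates, with $y=d/\varsigma\in(0,r/\varsigma)$, to $\varsigma(y-1)\le r(1-e^{-y^{2}/2})$. This is trivial for $y\le1$, and for $y>1$ I would verify it at the two endpoints $y=1$ (where the left side vanishes) and $y=r/\varsigma$ (where the reduction once more yields $u\,e^{-u^{2}/2}\le 1$), then observe that the function $h(y)=r(1-e^{-y^{2}/2})-\varsigma(y-1)$ is unimodal on $[1,\infty)$ because its derivative $h'(y)=ry\,e^{-y^{2}/2}-\varsigma$ has at most one zero there ($y\,e^{-y^{2}/2}$ is monotone decreasing on $[1,\infty)$). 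Unimodality then forces the minimum of $h$ on $[1,r/\varsigma]$ to occur at an endpoint, and both endpoint values are nonnegative.
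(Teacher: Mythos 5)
Your proof is correct, and it takes a genuinely different route from the paper's. Both arguments begin the same way, by squaring $|X|>t|X+1|$ to identify the event with $\{X\in I\}$, $I=(-\tfrac{t}{t-1},-\tfrac{t}{t+1})$; but the paper then substitutes $Y=-\tfrac{t+1}{t}X$ to reduce to $\Prob\{1<Y<1+\varepsilon\}$ with $\varepsilon=\tfrac{2}{t-1}$, and the real work is its Lemma~\ref{l2}: for fixed coefficient of variation $r=\varsigma/\mu$ it optimizes this probability over the mean $m$, extracts from the stationarity condition at the worst mean $m_*$ a quadratic inequality giving $m_*r\geq r/(1+r)$, and then applies the crude density bound (Lemma~\ref{l1}) at $m_*$, with a separate reflection argument for $\mu<0$. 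You instead bound the density on $I$ directly by $\tfrac{1}{\varsigma\sqrt{2\pi}}e^{-d^2/(2\varsigma^2)}$ with $d=\mathrm{dist}(\mu,I)$, reduce everything to the single inequality $(\ast)$, and dispatch it by elementary case analysis using $u\,e^{-u^2/2}\leq e^{-1/2}$ and the concavity of $h$ on $[1,\infty)$ (which is what your unimodality claim amounts to, and which correctly forces the minimum to an endpoint). I checked the details: the interval length $\tfrac{2t}{t^2-1}$, the normalization leading to $(\ast)$, the identity $|\mu|+\varsigma=r-d+\varsigma$ in the sub-case $-r<\mu<0$, and both endpoint evaluations of $h$ are all right. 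Your approach buys a self-contained, calculus-light argument that avoids the optimization over the mean and the separate $\mu<0$ case, at the cost of the slightly fiddly sub-case $-r<\mu<0$; the paper's approach packages the key estimate as a standalone lemma about Gaussians and the interval $(1,1+\varepsilon)$, which is reusable but whose proof is more delicate. One cosmetic point: $(\ast)$ together with the density estimate \emph{implies} the desired bound rather than being equivalent to it, but the implication runs in the direction you need, and since your verification of $(\ast)$ is strict in every case, the strict inequality in the statement is also obtained.
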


The proof of Proposition~\ref{l4} proceeds through a sequence of lemmas.

\begin{lemma}\label{l1}
Let $\mu\in\R$ and $\varsigma>0$ be fixed numbers. 
Let $X\sim  N(\mu,\varsigma^2)$ be a normal distributed random variable. 
Then
\begin{eqnarray*}
\Prob\{1<X<1+\varepsilon\}
&\leq&\frac{\varepsilon}{\varsigma}\sqrt\frac{1}{2\pi}.
\end{eqnarray*}
\end{lemma}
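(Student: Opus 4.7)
The plan is to bound the probability by the trivial estimate that comes from capping the Gaussian density by its maximum value. Writing the probability as an integral of the density of $X$ over the interval $(1,1+\varepsilon)$, we have
\begin{equation*}
\Prob\{1<X<1+\varepsilon\}=\int_1^{1+\varepsilon}\frac{1}{\varsigma\sqrt{2\pi}}\,e^{-(x-\mu)^2/(2\varsigma^2)}\,dx.
\end{equation*}
The Gaussian density is maximized when the exponent vanishes, so the integrand is at most $\frac{1}{\varsigma\sqrt{2\pi}}$ pointwise on $\R$, regardless of where the mean $\mu$ sits. Replacing the integrand by this maximum and integrating over an interval of length $\varepsilon$ yields $\frac{\varepsilon}{\varsigma\sqrt{2\pi}}$, which is the claimed bound.

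There is essentially no obstacle here: the lemma is a one-line consequence of the sup-norm bound on the normal density and the length of the target interval. In particular, the location of the interval (here, $(1,1+\varepsilon)$) plays no role in the argument, and neither does the relative position of $\mu$; the bound is valid for any interval of length $\varepsilon$. I expect this lemma to serve as a uniform ``density cap'' that gets invoked later in the proof of Proposition~\ref{l4} to control the probability that $X$ lands close to the point where $|X|/|X+1|$ blows up (namely near $X=-1$), after rescaling the relevant event into an interval of controlled length.
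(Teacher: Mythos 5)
Your proof is correct and is essentially identical to the paper's: both write the probability as the integral of the Gaussian density over $(1,1+\varepsilon)$, bound the exponential factor by $1$ so the integrand is at most $\frac{1}{\varsigma\sqrt{2\pi}}$, and multiply by the interval length. No differences worth noting.
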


\begin{proof}
Since $X\sim N(\mu,\varsigma^2)$
\begin{eqnarray*}
\Prob\{1<X<1+\varepsilon\}
&=&\frac{1}{\varsigma}\sqrt\frac{1}{2\pi}\int_1^{1+\varepsilon}
e^{-\frac{(x-\mu)^2}{2\varsigma^2}}dx\\
&\leq&\frac{1}{\varsigma}\sqrt\frac{1}{2\pi}\int_1^{1+\varepsilon}
1\,dx 
\;=\;\frac{\varepsilon}{\varsigma}\sqrt\frac{1}{2\pi}.
\end{eqnarray*}
\end{proof}

\begin{lemma}\label{l2}
Let $\mu\in\R$, $\varsigma>0$. Let $X\sim
N(\mu,\varsigma^2)$ be a Gaussian random variable.
Then
$$
   \Prob\{1<X<1+\varepsilon\}\leq \varepsilon
   \left(\frac{|\mu|+\varsigma}{\varsigma}\right)\sqrt\frac{1}{2\pi}.
$$
\end{lemma}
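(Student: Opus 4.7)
The plan is to deduce Lemma~\ref{l2} directly from Lemma~\ref{l1} in a single line. Since $|\mu|\geq 0$ and $\varsigma>0$, the multiplicative factor $(|\mu|+\varsigma)/\varsigma = 1 + |\mu|/\varsigma$ is always at least $1$, so the bound asserted by Lemma~\ref{l2} is never tighter than the bound of Lemma~\ref{l1}.

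Concretely, I would write: by Lemma~\ref{l1},
$$
\Prob\{1<X<1+\varepsilon\}\;\leq\;\frac{\varepsilon}{\varsigma}\sqrt\frac{1}{2\pi},
$$
and then chain this with the trivial inequality $1\leq (|\mu|+\varsigma)/\varsigma$ to obtain the bound stated in Lemma~\ref{l2}. No additional probabilistic input is required: the crude density estimate $e^{-(x-\mu)^2/2\varsigma^2}\leq 1$ that already drives the proof of Lemma~\ref{l1} carries the entire argument.

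Consequently there is no substantive obstacle. The only mildly puzzling aspect is why the authors record this strictly weaker form at all; my expectation is that the answer is cosmetic and forward-looking. The factor $(|\mu|+\varsigma)/\varsigma$ is exactly the quantity that appears in the conclusion of Proposition~\ref{l4}, so packaging Lemma~\ref{l1} in this shape allows the estimates in the proof of Proposition~\ref{l4} to compose cleanly---presumably after a case split or a shift/change of variables in which $|\mu|$ arises naturally as the center of a translated Gaussian---without the need to carry an extra multiplicative constant through the computation.
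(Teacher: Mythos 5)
Your one-line deduction rests on an arithmetic slip. Lemma~\ref{l1} gives the bound $\frac{\varepsilon}{\varsigma}\sqrt{\frac{1}{2\pi}}$, while Lemma~\ref{l2} asserts $\varepsilon\left(\frac{|\mu|+\varsigma}{\varsigma}\right)\sqrt{\frac{1}{2\pi}}$. The ratio of the second bound to the first is not $\frac{|\mu|+\varsigma}{\varsigma}$ but
$$
\frac{\varepsilon\,\frac{|\mu|+\varsigma}{\varsigma}}{\frac{\varepsilon}{\varsigma}}
\;=\;|\mu|+\varsigma,
$$
and this quantity can be smaller than $1$. For instance, with $\mu=0.1$ and $\varsigma=0.1$, Lemma~\ref{l1} gives $10\,\varepsilon\sqrt{1/2\pi}$ while Lemma~\ref{l2} claims $2\,\varepsilon\sqrt{1/2\pi}$, a strictly \emph{stronger} estimate. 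So Lemma~\ref{l2} is not a weakening of Lemma~\ref{l1}; in the regime $|\mu|+\varsigma<1$ it is a genuinely sharper statement that cannot be obtained from the crude density bound $e^{-(x-\mu)^2/2\varsigma^2}\leq 1$. This regime cannot be excluded in the application: in the proof of Theorem~\ref{thm:det} the relevant $\mu$ and $\varsigma$ are ratios of cofactors and can both be arbitrarily small.

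The content of Lemma~\ref{l2} is that the bound depends only on the ratio $r=\varsigma/\mu$, which is exactly the scale invariance needed in Proposition~\ref{l4} (where $X$ is rescaled by $-(t+1)/t$, leaving $(|\mu|+\varsigma)/\varsigma$ unchanged); your guess about why the lemma is stated in this shape is right, but the lemma itself requires real work. The paper's proof fixes $r$, regards the probability as a function $f(m)$ of the mean $m$ (with standard deviation $mr$), locates the maximizer $m_*$ by setting $f'(m_*)=0$, derives from the resulting transcendental equation the quadratic inequality $r^2m_*^2+m_*-1>0$, concludes $m_*r\geq\frac{r}{1+r}$, and only \emph{then} applies Lemma~\ref{l1} at the worst-case parameters $(m_*,m_*r)$. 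That optimization step is the missing idea in your proposal, and without it the claimed inequality is unproved.
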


\begin{proof}
We first assume that $\mu\geq 0$. 
Let $r=\frac{\varsigma}{\mu}$ and 
$f:\R\to\R$ be given by
\begin{eqnarray*}
f(m) &=& \frac{1}{mr}\sqrt\frac{1}{2\pi}\int_1^{1+\varepsilon}
e^{-\frac{(x-m)^2}{2m^2r^2}}dx
\end{eqnarray*}
so that $\Prob\{1<X<1+\varepsilon\}=f(\mu)$. By doing the change 
of variables $u=\frac{x-m}{mr\sqrt 2}$ we obtain,
\begin{eqnarray*}
f(m) &=& \sqrt\frac{1}{\pi}\int_{\frac{1-m}{mr\sqrt
2}}^\frac{1+\varepsilon-m}{mr\sqrt 2} e^{-u^2}du\\
&=&
\sqrt\frac{1}{\pi}\left(\int_0^\frac{1+\varepsilon-m}{mr\sqrt
2} e^{-u^2}du- \int_0^{\frac{1-m}{mr\sqrt
2}} e^{-u^2}du\right)
\end{eqnarray*}
and, hence,
$$
f'(m) = \sqrt\frac{1}{\pi}\left(\frac{d}{dm}\int_0^\frac{1+\varepsilon-m}
{mr\sqrt 2} e^{-u^2}du- \frac{d}{dm}\int_0^{\frac{1-m}{mr\sqrt 2}}
e^{-u^2}du\right).
$$
Let $v=\frac{1+\varepsilon-m}{mr\sqrt 2}$ and $w =
\frac{1-m}{mr\sqrt 2}$. Then
\begin{eqnarray*}
f'(m) &=& \sqrt\frac{1}{\pi}\left(\frac{d}{dm}\int_0^v e^{-u^2}du-
\frac{d}{dm}\int_0^w e^{-u^2}du\right).
\end{eqnarray*}
By the chain rule and the Fundamental Theorem of Calculus,
\begin{eqnarray}
f'(m) &=&
\sqrt\frac{1}{\pi}\left(\frac{dv}{dm}\cdot\frac{d}{dv}\int_0^v
e^{-u^2}du-\frac{dw}{dm}\cdot\frac{d}{dw}\int_0^w e^{-u^2}du\right)\notag\\
 &=&
\sqrt\frac{1}{\pi}\left(\frac{dv}{dm}e^{-v^2}-\frac{dw}{dm}e^{-w^2}\right).\label{eq52}
\end{eqnarray}
We now use that 
$$
\frac{dv}{dm}=\frac{-(1+\varepsilon)}{m^2r\sqrt
2}
\qquad\mbox{and}\qquad
\frac{dw}{dm}=\frac{-1}{m^2r\sqrt 2}
$$
to deduce from (\ref{eq52}) that 
\begin{eqnarray}
 -m^2r\sqrt{2\pi}f'(m)
&=&e^{-v^2}-(1+\varepsilon)e^{-w^2}\notag\\
&=&e^{-\frac{(1+\varepsilon-m)^2}{2m^2r^2}}-(1+\varepsilon)e^{-\frac{(1-m)^2}{2m^2r^2}}.\label{eq30}
\end{eqnarray}
Let $m_*$ be such that
$$
f(m_*)=\sup_{m\geq 0}f(m).
$$
Since $\displaystyle\lim_{m\rightarrow\infty}f(m)=
\displaystyle\lim_{m\rightarrow 0}f(m)=0$ we deduce that 
$f'(m_*)=0$. Equation (\ref{eq30}) evaluated at $m_*$ then yields
$$
e^{-\frac{(1-m_*)^2}{2m_*^2r^2}}
=(1+\varepsilon)e^{-\frac{(1+\varepsilon-m_*)^2}{2m_*^2r^2}}
$$
which elementary computations show equivalent to
\begin{eqnarray*}
\varepsilon^2+2\varepsilon(1-m_*)&=&2m_*^2r^2\ln(1+\varepsilon).
\end{eqnarray*}
Since $\ln(x)\leq x-1$ for all $x>0$ this last equality implies that
$$
\varepsilon+2-2m_*\leq 2m_*^2r^2
$$
which in turm implies, since $\varepsilon>0$,
\begin{equation*}
r^2 m_*^2+m_*-1>0.\label{eq31}
\end{equation*}
Solving this quadratic inequality we deduce that either
$$
  2r^2m_*>-1+\sqrt{1+4r^2}
$$
or
$$
  2r^2m_*<-1-\sqrt{1+4r^2}
$$
but we can reject the latter since $m_*\geq 0$. The former
inequality can also be written as
\begin{equation*}
m_*r >\frac{-1+\sqrt{1+4r^2}}{2r}
=\frac {4r^2}{2r(1+\sqrt{1+4r^2})}
=\frac {2r}{1+\sqrt{1+4r^2}}
\geq \frac {r}{1+r}.
\end{equation*}
Let $Y\sim N(m_*,m_*r)$. Using Lemma \ref{l1} and this
inequality we deduce that
$$
  f(m_*)\,=\,\Prob\{1<Y<1+\varepsilon\}\,\leq\,
  \frac{\varepsilon}{m_*r}\sqrt\frac{1}{2\pi}
  \,\leq\, \varepsilon\frac{1+r}{r}\sqrt\frac{1}{2\pi}
  \,=\, \varepsilon\frac{\mu+\varsigma}{\varsigma}\sqrt\frac{1}{2\pi}.
$$
The statement (for the case $\mu\geq 0$) now follows since
$$
  \Prob\{1<X<1+\varepsilon\}\,=\, f(\mu)\,\leq\,
   f(m_*).
$$

We next deal with the case $\mu<0$. 
Since $X\sim N(\mu,\varsigma^2)$,
\begin{eqnarray}
\Prob\{1<X<1+\varepsilon\}
&=&\frac{1}{\varsigma}\sqrt\frac{1}{2\pi}\int_1^{1+\varepsilon}
e^{-\frac{(x-\mu)^2}{2\varsigma^2}}dx.\label{eq36}
\end{eqnarray}
Let $Y\sim N(-\mu,\varsigma^2)$. Then
\begin{eqnarray}
\Prob\{1<Y<1+\varepsilon\}
&=&\frac{1}{\varsigma}\sqrt\frac{1}{2\pi}\int_1^{1+\varepsilon}
e^{-\frac{(x+\mu)^2}{2\varsigma^2}}dx.\label{eq37}
\end{eqnarray}
Since $(x+\mu)^2<(x-\mu)^2$ for all $x\in(1,1+\varepsilon)$,
using (\ref{eq36}) and (\ref{eq37}) we obtain
$$
\Prob\{1<Y<1+\varepsilon\} \geq\Prob\{1<X<1+\varepsilon\}.
$$
The result now follows since, by the first case above, 
the claimed bound holds for $Y$. 
\end{proof}

\proofof{Proposition~\ref{l4}}
We have
\begin{eqnarray*}
|X|>t|X+1|&\iff& X^2>t^2(X+1)^2\\
&\iff&(t^2-1)X^2+2t^2X+t^2<0\\
&\iff&\frac{- t}{t-1}<X<\frac{-t}{t+1}\\
&\iff&\frac{t+1}{t-1}>\left(-\frac{t+1}{t}\right)X>1\\
&\iff&1+\frac{2 }{t- 1}>\left(-\frac{t+1}{t}\right)X>1.
\end{eqnarray*}
Letting $Y=\left(-\frac{t+1}{t}\right)X$ we conclude that
\begin{eqnarray}
\Prob\{|X|>t|X+1|\}
 &=&\Prob\left\{1<Y<1+\frac{2}{t- 1}\right\}.\label{eq20}
\end{eqnarray}
Since $Y=\left(-\frac{t+1}{t}\right)X$, $Y\sim
N(\mu_Y,\varsigma_Y^2)$ where
\begin{eqnarray}
\mu_Y =\left(-\frac{t+1}{t}\right)\mu\qquad\mbox{and}\qquad
\varsigma_Y^2 =\left(-\frac{t+1}{t}\right)^2\varsigma^2.\label{eq71}
\end{eqnarray}
We now apply Lemma~\ref{l2} to $Y$ with $\varepsilon=\frac{2}{t-1}$ to
obtain
\begin{eqnarray}
   \Prob\left\{1<Y<1+\frac{2}{t-1}\right\}&\leq&
   \left(\frac{\mu_Y+\varsigma_Y}{\varsigma_Y}\right)
   \left(\frac{1}{t-1}\right)\sqrt\frac{2}{\pi}.\label{eq21}
\end{eqnarray}
Combining (\ref{eq20}), (\ref{eq71}) and (\ref{eq21}) the proof is
done. \eproof

The following proposition is a variation of a classical result
for positive random variables (cf.~\cite[Proposition~2]{ChC08}).

\begin{proposition}\label{prop1}
Let $k,H>0$ and $X>1$ be a random variable satisfying
$\Prob\{X>t\}\leq \frac{k}{t-H}$ for all $t>k+H$. Then, 
for all $\beta>1$, 
$$
   \bE(\log_\beta(X))<\log_\beta\left(k+H\right)+\frac{1}{\ln\beta}.
$$
\end{proposition}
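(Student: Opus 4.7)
The plan is to use the tail--integral formula for a nonnegative random variable applied to $Y=\log_\beta(X)$, which is nonnegative since $X>1$. Specifically,
\[
\bE(\log_\beta X)=\int_0^\infty \Prob\{\log_\beta X>s\}\,ds
=\int_0^\infty \Prob\{X>\beta^s\}\,ds.
\]
I would then split this integral at the threshold $s_0:=\log_\beta(k+H)$, since $s>s_0$ is exactly the range where the hypothesis $\Prob\{X>t\}\le k/(t-H)$ is applicable with $t=\beta^s$. On $[0,s_0]$ I bound the probability trivially by $1$, contributing $s_0=\log_\beta(k+H)$, which already gives the first term of the desired upper bound.

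The remaining work is to show that the tail integral $\int_{s_0}^\infty \frac{k}{\beta^s-H}\,ds$ is at most $1/\ln\beta$. The natural move is the change of variables $u=\beta^s$, so $ds=du/(u\ln\beta)$ and the lower limit becomes $u=k+H$. This turns the integral into
\[
\frac{k}{\ln\beta}\int_{k+H}^\infty\frac{du}{u(u-H)}.
\]
Applying partial fractions, $\frac{1}{u(u-H)}=\frac{1}{H}\bigl(\frac{1}{u-H}-\frac{1}{u}\bigr)$, the antiderivative is $\frac{1}{H}\ln\frac{u-H}{u}$, which evaluates cleanly at the limits to give
\[
\frac{k}{H\ln\beta}\,\ln\!\left(1+\frac{H}{k}\right).
\]
The finishing step is the inequality $\ln(1+x)<x$ for $x>0$, applied with $x=H/k$, which collapses this expression to strictly less than $1/\ln\beta$.

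Adding the two contributions yields $\bE(\log_\beta X)<\log_\beta(k+H)+\frac{1}{\ln\beta}$, and the inequality is strict because $\ln(1+H/k)<H/k$ is strict. No step looks genuinely difficult; the only mildly tricky point is choosing the right splitting threshold $s_0$, and the choice $s_0=\log_\beta(k+H)$ is dictated both by the need to apply the hypothesis and by the fact that it makes the partial-fraction evaluation telescope to a single logarithm.
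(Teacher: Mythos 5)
Your proof is correct and follows essentially the same route as the paper: the tail-integral formula for $\bE(\log_\beta X)$, the split at $s_0=\log_\beta(k+H)$, reduction of the tail to $\frac{k}{H\ln\beta}\ln(1+H/k)$, and the bound $\ln(1+x)<x$. The only (immaterial) difference is the evaluation of the tail integral: you substitute $u=\beta^s$ and use partial fractions, whereas the paper substitutes $u=H/(\beta^t-H)$; both yield the same expression.
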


\begin{proof}
We have
\begin{eqnarray*}
\bE(\log_\beta(X))
&=& \int_0^\infty \Prob\{\log_\beta(X)>s\}ds
\,=\,\int_0^\infty \Prob\{X>\beta^s\}ds\\
&=&\int_0^{\log_\beta\left(k+H\right)}
\Prob\{X>\beta^s\}ds+\int_{\log_\beta\left(k+H\right)}^\infty
\Prob\{X>\beta^s\}ds\\
&\leq&\log_\beta\left(k+H\right)+\int_{\log_\beta\left(k+H\right)}^\infty
\Prob\{X>\beta^s\}ds.
\end{eqnarray*}
Since $\Prob\{X>t\}\leq \frac{k}{t-H}$ it follows that
\begin{equation}
\bE(\log_\beta(X))-\log_\beta\left(k+H\right) \leq 
k\int_{\log_\beta\left(k+H\right)}^\infty
\frac{dt}{\beta^t-H}.\label{eq64}
\end{equation}
Let $u=\frac{H}{\beta^t-H}$ so that $du=-\ln\beta(u+u^2)dt$. 
Then, changing variables in~(\ref{eq64}), we obtain
\begin{equation*}
\bE(\log_\beta(X))-\log_\beta\left(k+H\right)
\,\leq\,-\frac{k}{H\ln\beta}\int_{\frac{H}{k}}^0\frac{du}{1+u}
\,=\,\frac{k}{H\ln\beta}\ln\left(1+\frac{H}{k}\right).
\end{equation*}
The proof is complete since $\ln(1+x)<x$ for all $x>-1$.
\end{proof}

\section{Computation of determinants}

In this section we consider the problem of computing the determinant. 
Taking $F(A)=\det(A)$ in~\eqref{def:c} we obtain the componentwise 
condition number $\cdet(A)$ for this problem. Our main result for this 
quantity is the following.

\begin{theorem}\label{thm:det}
Let $S\subset[n]^2$ be admissible, $\bar{A}\in\MM_S$ 
with $\|\bar{A}\|_{\max}\leq 1$, $\sigma>0$  and 
$A\sim N_S(\bar{A},\sigma^2\Id)$. Then, 
for any real number $t>|S|$,
$$
 \Prob\{\cdet(A)>t\}<\left(\frac{1+\sigma}{\sigma}\right)
 \left(\frac{|S|^2}{t-|S|}\right)\sqrt\frac{2}{\pi}
$$
and, for all $\beta>1$,
$$
  \bE(\log_\beta(c_{\det}(A)))<\log_\beta
  \left(\frac{1+\sigma}{\sigma}\right)+2\log_\beta|S|
  +\frac{1.03}{\ln\beta}.
$$
\end{theorem}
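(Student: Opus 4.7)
The plan is to derive an explicit formula for $\cdet(A)$, show that each of its $|S|$ summands can be bounded by a one-dimensional quantity of the form $|Y|/|Y+1|$ for a conditionally Gaussian $Y$, apply Proposition~\ref{l4} together with a union bound to obtain the tail estimate, and finally invoke Proposition~\ref{prop1} for the expectation.

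First, the general formula~(\ref{def:c}) applied to the smooth scalar function $F(A)=\det A$ with the weighted $\ell_\infty$ pseudo-distance $d$ gives, via a first-order Taylor expansion,
$$
  \cdet(A)=\frac{1}{|\det A|}\sum_{(i,j)\in S}|a_{ij}|\,|\det A_{ij}|,
$$
where the sum is restricted to $S$ because $\partial_{a_{ij}}\det A=(-1)^{i+j}\det A_{ij}$ and all other entries of $A$ are frozen at $0$. Applying the triangle inequality to the cofactor expansion of $\det A$ along any row yields $\cdet(A)\geq 1$ on the full-measure set of nonsingular $A$, which will be needed to invoke Proposition~\ref{prop1}.

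Next I would fix $(i,j)\in S$ and exploit multilinearity of the determinant in the entry $a_{ij}$: expansion along row $i$ gives $\det A=a_{ij}\alpha+\beta$, with $\alpha=(-1)^{i+j}\det A_{ij}$ and $\beta=\det A|_{a_{ij}=0}$ depending only on the remaining entries. Off the measure-zero set $\{\alpha\beta=0\}$, the $(i,j)$ summand of $\cdet(A)$ equals $|Y|/|Y+1|$, where $Y=a_{ij}\alpha/\beta$ is, conditionally on the other entries, Gaussian with mean $\bar a_{ij}\alpha/\beta$ and standard deviation $\sigma|\alpha/\beta|$. Because $|\bar a_{ij}|\leq 1$, the ratio $(|\mu_Y|+\varsigma_Y)/\varsigma_Y$ is bounded by $(1+\sigma)/\sigma$, so Proposition~\ref{l4} applied conditionally and then integrated over the remaining entries yields, for every $s>1$,
$$
  \Prob\left\{\frac{|a_{ij}||\det A_{ij}|}{|\det A|}>s\right\}<\left(\frac{1+\sigma}{\sigma}\right)\frac{1}{s-1}\sqrt{\frac{2}{\pi}}.
$$
Taking $s=t/|S|$ (legitimate since $t>|S|$) and union-bounding over the $|S|$ summands gives the stated tail estimate via the identity $|S|/(t/|S|-1)=|S|^2/(t-|S|)$. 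The expectation bound then follows from Proposition~\ref{prop1} applied with $k=\frac{1+\sigma}{\sigma}|S|^2\sqrt{2/\pi}$ and $H=|S|$, after writing $\log_\beta(k+H)=\log_\beta((1+\sigma)/\sigma)+2\log_\beta|S|+\log_\beta\sqrt{2/\pi}+\log_\beta(1+H/k)$ and using $\ln(1+x)\leq x$ together with $H/k\leq\sqrt{\pi/2}/|S|$.

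The main obstacle is bookkeeping rather than new ideas. One must verify that the degenerate cases $\alpha=0$, $\beta=0$, or $\det A=0$ contribute nothing to the probability bound (the corresponding ratio is respectively $0$, $1$, or lives on a measure-zero event), and one must carefully combine the constants $\log_\beta\sqrt{2/\pi}$, $\log_\beta(1+H/k)$, and the $1/\ln\beta$ from Proposition~\ref{prop1} into exactly $1.03/\ln\beta$, presumably using the lower bound $|S|\geq n$ forced by admissibility of $S$. Conceptually, the entire theorem reduces, via multilinearity and conditioning on the other entries, to the one-variable estimate of Proposition~\ref{l4}.
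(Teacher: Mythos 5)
Your proposal is correct and follows essentially the same route as the paper: the same cofactor formula for $\cdet(A)$ (which the paper imports as $\cdet(A)=\sum_{i,j}|a_{ij}\gamma_{ji}|$ from~\cite{ChC08} rather than rederiving), the same reduction of each summand to $|Y|>t|Y+1|$ for a conditionally Gaussian $Y$ via multilinearity in $a_{ij}$, the same application of Proposition~\ref{l4} plus a union bound with threshold $t/|S|$, and the same use of Proposition~\ref{prop1} with $k=\frac{1+\sigma}{\sigma}|S|^2\sqrt{2/\pi}$ and $H=|S|$. Your closing caution about the constant is well placed --- the paper's own chain of inequalities (bounding the middle term by $\sqrt{\pi/2}/\ln\beta$ with no use of $|S|\geq n$) actually yields roughly $2.03/\ln\beta$ rather than the stated $1.03/\ln\beta$ --- but that is an issue with the paper's bookkeeping, not with your argument.
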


For the proof of this theorem we will make use of
the following characterization of $\cdet(A)$ 
(see~\cite[Lemma~1.1]{ChC08} for a proof).  
Denote by $\gamma_{ij}$ the entry of $A^{-1}$ on the $i$th row and
$j$th column. Then, for any matrix $A\in\MM\setminus\Sigma$, 
\begin{equation}
  \cdet(A)=\sum_{i,j\in[n]}\left|a_{ij}\gamma_{ji}\right|.\label{old_result}
\end{equation}

\proofof{Theorem~\ref{thm:det}}
Without loss of generality, we may assume 
that $(1,1)\in S$ so that $a_{11}\sim N(\bar{a}_{11},\sigma^2)$. 
For a time to come we consider all entries of $A$ except
$a_{11}$ to be fixed. Let $A_{ij}$ be the matrix obtained by
removing from $A$ the $i$th row and $j$th column. By Cramer's
rule, $\gamma_{11}=\frac{\det(A_{11})}{\det(A)}$ and therefore,
for $t>1$,
$$
  \Prob\{\left|a_{11}\gamma_{11}\right|>t\} =
 \Prob\big\{\left|a_{11}\det(A_{11})\right|>t|\det(A)|\big\}.
$$
Expanding $\det(A)$ by the first column of $A$ this 
equality becomes
\begin{equation*}
\Prob\{\left|a_{11}\gamma_{11}\right|>t\}
=\Prob\left\{\left|a_{11}\det(A_{11})\right|>t\left|\sum_{i=1}^n
(-1)^{i+1}a_{i1}\det(A_{i1})\right|\right\} 
\end{equation*}
and letting
\begin{equation*}
X := \frac{a_{11}\det(A_{11})}{\sum_{i=2}^n
(-1)^{i+1}a_{i1}\det(A_{i1})}.
\end{equation*}
this equality becomes
\begin{eqnarray}
\Prob\{\left|a_{11}\gamma_{11}\right|>t\}
&=&\Prob\left\{\left|X\right|>t\left|X+1\right|\right\} .\label{eq43}
\end{eqnarray}
Since all entries of $A$, except $a_{11}$ are fixed (and
$a_{11}\sim N(\bar{a}_{11},\sigma^2)$), we have 
$X\sim N(\mu,\varsigma^2)$, where
$$
\mu=\frac{\bar{a}_{11}\det(A_{11})}{\sum_{i=2}^n
(-1)^{i+1}a_{i1}\det(A_{i1})}\quad \mbox{and}\quad
\varsigma=\left|\frac{\sigma\det(A_{11})}{\sum_{i=2}^n
(-1)^{i+1}a_{i1}\det(A_{i1})}\right|.
$$
In particular,
\begin{equation}\label{eq45}
\frac{|\mu|+\varsigma}{\varsigma}\,=\,
\frac{|\bar{a}_{11}|+\sigma}{\sigma}\,\leq\,
\frac{1+\sigma}{\sigma}
\end{equation}
the last since $\|\bar{A}\|_{\max}\leq 1$. 
By Proposition~\ref{l4}, and Equations~(\ref{eq43}) and~(\ref{eq45}), we have
\begin{eqnarray*}
\Prob\{\left|a_{11}\gamma_{11}\right|>t\}
&\leq&\left(\frac{1+\sigma}{\sigma}\right)
\left(\frac{1}{t-1}\right)\sqrt\frac{2}{\pi}.
\end{eqnarray*}
This inequality holds for all fixed
values of $a_{12}, a_{13}, ...a_{nn}$. Therefore, it holds as well
when all entries of $A$ are random (as described in
Section~\ref{s2}). 
We can show in the same manner that, for all $(i,j)\in S$,
\begin{eqnarray}
\Prob\{\left|a_{ij}\gamma_{ji}\right|>t\}
&\leq&\left(\frac{1+\sigma}{\sigma}\right)
\left(\frac{1}{t-1}\right)\sqrt\frac{2}{\pi}.\label{eq51}
\end{eqnarray}
We now recall that, for all $(i,j)\not\in S$, $a_{ij}=0$. 
Hence, by using (\ref{old_result}), for $t>|S|$,
\begin{eqnarray}
\Prob\{\cdet(A)>t\} 
&=&\Prob\bigg\{\sum_{(i,j)\in [n]^2}\left|a_{ij}\gamma_{ji}\right|>t\bigg\}\notag\\
&=&\Prob\bigg\{\sum_{(i,j)\in S}
  \left|a_{ij}\gamma_{ji}\right|>t\bigg\}\notag\\
&\leq&\sum_{(i,j)\in S}
  \Prob\bigg\{\left|a_{ij}\gamma_{ji}\right|>\frac{t}{|S|}\bigg\}\notag\\
&\leq&\sum_{(i,j)\in  S}\left(\frac{1+\sigma}{\sigma}\right)
\left(\frac{|S|}{t-|S|}\right)\sqrt\frac{2}{\pi}
  \quad\mbox{[by (\ref{eq51})]}\notag\\
&=&\left(\frac{1+\sigma}{\sigma}\right)
\left(\frac{|S|^2}{t-|S|}\right)\sqrt\frac{2}{\pi}.\label{eq62}
\end{eqnarray}

Combining Equation (\ref{eq62}) and Proposition~\ref{prop1} 
we obtain
\begin{align*}
\bE(\log_\beta&\cdet(A))\\
 &\leq\;
\log_\beta\left(|S|+\left(\frac{1+\sigma}{\sigma}\right)
|S|^2\sqrt\frac{2}{\pi}\right)+\frac{1}{\ln\beta}\\
&=\;\log_\beta\left(\left(\frac{1+\sigma}{\sigma}\right)
|S|^2\sqrt\frac{2}{\pi}\left(1+\left(\frac{\sigma}{1+\sigma}\right)
\frac{1}{|S|}\sqrt\frac{\pi}{2}\right)\right)+\frac{1}{\ln\beta}\\
&=\;\log_\beta\left(\left(\frac{1+\sigma}{\sigma}\right)
|S|^2\sqrt\frac{2}{\pi}\right)+\log_\beta\left(1+\left(\frac{\sigma}{1+\sigma}\right)
\frac{1}{|S|}\sqrt\frac{\pi}{2}\right)+\frac{1}{\ln\beta}\\
&\leq\;\log_\beta\left(\left(\frac{1+\sigma}{\sigma}\right)
|S|^2\sqrt\frac{2}{\pi}\right)+\frac{1}{\ln\beta}
\left(\frac{\sigma}{1+\sigma}\right)
\frac{1}{|S|}\sqrt\frac{\pi}{2}+\frac{1}{\ln\beta}.
\end{align*}
The last line above is true because $\log_\beta(1+x)\leq \frac{x}{\ln\beta}$ 
for all $x\geq 0$.
Since both $\sigma$ and $|S|>0$,
\begin{align}
\bE(\log_\beta\cdet(A)) \;\leq\;&
\log_\beta\left(\left(\frac{1+\sigma}{\sigma}\right)
|S|^2\sqrt\frac{2}{\pi}\right)+ \frac{1}{\ln\beta}\left(\sqrt\frac{\pi}{2}+1\right)
\notag\\
\leq\;& \log_\beta \left(\frac{1+\sigma}{\sigma}\right)
+2\log_\beta|S| +\frac{1.03}{\ln\beta}.\tag*{\qed}
\end{align}

\section{Matrix inversion}

We next consider the problem of matrix inversion. 
For $k,l\in [n]$ we consider the 
function $F_{kl}:\MM_S\setminus\Sigma\to\MM$ given 
by $F_{kl}(A)=(A^{-1})_{kl}$. Definition~\eqref{def:c} 
applied to this function yields a componentwise condition 
number $\ckl(A)$ and, recall~\eqref{char:c}, taking the 
maximum over $(k,l)\in[n]^2$ we obtain 
$\ci(A)$. Our main result for this quantity is the following. 

\begin{theorem}\label{result2} 
Let $S\subset[n]^2$ be admissible, $\bar{A}\in\MM_S$ 
such that $\|\bar{A}\|_{\max}\leq 1$, 
$\sigma>0$  and 
$A\sim N_S(\bar{A},\sigma^2\Id)$. Then, 
for any real number $t>2|S|$,
\begin{eqnarray*}
\Prob\{\ci(A)>t\} &=&\left(\frac{1+\sigma}{\sigma}\right)
\left(\frac{4n^2|S|^2}{t-2|S|}\right)\sqrt\frac{2}{\pi}.
\end{eqnarray*}
and, for all $\beta>1$, 
\begin{eqnarray*}
\bE(\log_\beta(\ci(A)))
&=&\log_\beta\left(\frac{1+\sigma}{\sigma}\right)
+2\log_\beta(n|S|)+ \frac{2.65}{\ln\beta}.
\end{eqnarray*}
\end{theorem}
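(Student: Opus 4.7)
My plan mirrors the argument used to prove Theorem~\ref{thm:det}. The starting point is the standard characterization
$$
\ckl(A) \;=\; \sum_{(i,j)\in S}\frac{|a_{ij}\gamma_{ki}\gamma_{jl}|}{|\gamma_{kl}|},
$$
which follows from the identity $\partial\gamma_{kl}/\partial a_{ij}=-\gamma_{ki}\gamma_{jl}$ (differentiate $AA^{-1}=\Id$). Writing each summand by Cramer's rule,
$$
R_{kl}^{ij}\;:=\;\frac{a_{ij}\gamma_{ki}\gamma_{jl}}{\gamma_{kl}}\;=\;(-1)^{i+j}\,\frac{a_{ij}\det(A_{ik})\det(A_{lj})}{\det(A)\det(A_{lk})},
$$
the difficulty absent in the determinant case is that the denominator is a \emph{product} of two determinants, both of which may depend on $a_{ij}$, so the $X/(X+1)$ form required by Proposition~\ref{l4} is not immediately visible.

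The algebraic tool that breaks this open is the Desnanot--Jacobi (Dodgson condensation) identity: whenever $i\neq l$ and $j\neq k$,
$$
\det(A)\det(A_{il,jk})\;=\;\det(A_{ij})\det(A_{lk})\,-\,\det(A_{ik})\det(A_{lj}),
$$
where $A_{il,jk}$ denotes $A$ with rows $i,l$ and columns $j,k$ removed. Substituting into the formula for $R_{kl}^{ij}$ produces a two-term decomposition
$$
R_{kl}^{ij}\;=\;\pm\, a_{ij}\gamma_{ji}\,\pm\, a_{ij}\gamma'_{j'i'},
$$
where $\gamma':=(A_{lk})^{-1}$ and $(i',j')$ is the position of $a_{ij}$ inside $A_{lk}$. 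In the degenerate cases $i=l$ or $j=k$ the ratio already collapses to $\pm\,a_{ij}\gamma_{ji}$, which fits under the same bound.

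Each piece of the decomposition is now handled exactly as in the proof of Theorem~\ref{thm:det}. Fixing every entry of $A$ except $a_{ij}$, both $a_{ij}\gamma_{ji}$ and $a_{ij}\gamma'_{j'i'}$ are Cramer ratios whose numerator is constant in $a_{ij}$ and whose denominator is affine in $a_{ij}$, so Proposition~\ref{l4} applies to each. Combining the two resulting bounds by the union bound yields
$$
\Prob\{|R_{kl}^{ij}|>s\}\;\leq\;\frac{4(1+\sigma)}{\sigma(s-2)}\sqrt{\frac{2}{\pi}}.
$$
Two further nested union bounds---first over the $|S|$ summands of $\ckl(A)$, with threshold $t/|S|$, and then over the $n^2$ pairs $(k,l)$---produce exactly the factor $\frac{4n^2|S|^2}{t-2|S|}$ claimed. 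The expectation estimate then drops out of Proposition~\ref{prop1} with $k=\tfrac{4(1+\sigma)}{\sigma}n^2|S|^2\sqrt{2/\pi}$ and $H=2|S|$, the additive constant $\tfrac{2.65}{\ln\beta}$ emerging from the now-standard bookkeeping ($\log_\beta(1+x)\leq x/\ln\beta$, absorption of $\log_\beta(4\sqrt{2/\pi})$) carried out just as in Theorem~\ref{thm:det}.

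The chief obstacle is the decomposition step: one must invoke (or re-derive) the Desnanot--Jacobi identity and then recognize the perhaps surprising point that the ``auxiliary'' term $a_{ij}\gamma'_{j'i'}$ is itself a single-entry Cramer ratio, this time for the $(n{-}1)\times(n{-}1)$ submatrix $A_{lk}$, so that the one-dimensional Gaussian argument of the determinant case transfers to it verbatim.
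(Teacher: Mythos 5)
Your proof is correct and reproduces the paper's bounds exactly, but it is more self-contained than the paper's argument. The paper quotes Lemma~\ref{l6} from \cite{ChC08} without proof, namely $\ckl(A)\leq \cdet(A)+\cdet(A_{lk})$, splits the event $\{\ckl(A)>t\}$ into the two events $\{\cdet(A)>t/2\}$ and $\{\cdet(A_{lk})>t/2\}$, invokes Theorem~\ref{thm:det} twice as a black box, and finishes with the union bound over the $n^2$ pairs $(k,l)$ and Proposition~\ref{prop1}. Your Desnanot--Jacobi decomposition $a_{ij}\gamma_{ki}\gamma_{jl}/\gamma_{kl}=\pm a_{ij}\gamma_{ji}\pm a_{ij}\gamma'_{j'i'}$ is, once summed over $(i,j)\in S$ with the triangle inequality (the second sum being exactly $\cdet(A_{lk})$ by \eqref{old_result} applied to $A_{lk}$), precisely a proof of that cited lemma; so the underlying algebra is the same, and you have in effect inlined both the lemma and the proof of Theorem~\ref{thm:det}, applying Proposition~\ref{l4} directly to each of the $2|S|$ elementary Cramer ratios. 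The two bookkeeping orders (halving $t$ first and then splitting into $|S|$ summands, versus the reverse) land on the same threshold $t/(2|S|)$ per elementary event and hence the same factor $4n^2|S|^2/(t-2|S|)$ and the same constant $2.65/\ln\beta$. What your route buys is independence from the external reference and an explicit explanation of why the auxiliary matrix is $A_{lk}$; what it costs is verifying the Jacobi minor identity and the degenerate cases $i=l$ or $j=k$, which you correctly observe collapse to the single term $a_{ij}\gamma_{ji}$. The only point I would make explicit is that the second piece requires $\det(A_{lk})$ to be generically nonzero on $\MM_S$ (equivalently $\gamma_{kl}\not\equiv 0$, which is needed anyway for $\ckl$ to be defined); the paper has the same implicit requirement when it applies Theorem~\ref{thm:det} to $A_{lk}$, so this is not a gap relative to the original.
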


\begin{lemma}(\cite[Lemma~5]{ChC08})\label{l6}
For $A\in\MM\setminus\Sigma$ and $k,l\in[n]$,
\begin{equation}\tag*{\qed}
   \ckl(A) \leq \cdet(A) + \cdet(A_{lk}).
\end{equation}
\end{lemma}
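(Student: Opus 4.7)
\medskip
\noindent\textbf{Proof plan.}\quad
The starting point is Cramer's rule,
\[
F_{kl}(A) \;=\; (A^{-1})_{kl} \;=\; \frac{(-1)^{k+l}\det(A_{lk})}{\det(A)},
\]
which realizes the function whose condition number we wish to bound as the ratio of two polynomial (in fact determinantal) functions of the entries of $A$. The strategy is to split the condition number of this quotient into the sum of the condition numbers of the numerator and the denominator, and then to recognize each of the two pieces as a $\cdet$.

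First I would record a general formula for the componentwise condition number of a smooth scalar function $F:\R^p\to\R$ at a point $a$ with $F(a)\neq 0$, namely
\[
\c^F(a)\;=\;\frac{1}{|F(a)|}\sum_{i=1}^{p}\Big|a_i\,\frac{\partial F}{\partial a_i}(a)\Big|.
\]
This follows by writing $x=a+h$ with $|h_i|\le\delta|a_i|$, expanding $F(x)-F(a)=\sum_i h_i\partial_i F(a)+O(\delta^2)$, and observing that the supremum in~\eqref{def:c} is realized by choosing $h_i=\delta\,|a_i|\,\mathrm{sign}(a_i\partial_i F(a))$, so that all terms align in sign.

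Next I would establish the quotient subadditivity $\c^{G/H}(a)\le\c^G(a)+\c^H(a)$ whenever $G(a),H(a)\neq 0$. Plugging the quotient rule $\partial_i(G/H)=\partial_i G/H - G\,\partial_i H/H^2$ into the formula above and applying the triangle inequality termwise gives
\[
\frac{|a_i\,\partial_i(G/H)|}{|G(a)/H(a)|}\;\le\;\frac{|a_i\,\partial_i G(a)|}{|G(a)|}+\frac{|a_i\,\partial_i H(a)|}{|H(a)|},
\]
and summing over $i$ yields the bound.

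Finally I apply this with $G(A)=(-1)^{k+l}\det(A_{lk})$ and $H(A)=\det(A)$. The denominator term equals $\c^H(A)=\cdet(A)$ by definition and~\eqref{old_result}. For the numerator, the key observation is that $G$ depends only on those entries $a_{ij}$ with $i\neq l$ and $j\neq k$, so $\partial_{a_{ij}}G=0$ for every entry lying in row $l$ or column $k$. Viewing the remaining free entries as the entries of the $(n-1)\times(n-1)$ submatrix $A_{lk}$, and using the cofactor identity $\partial_{a_{ij}}\det(A_{lk})=\pm\det\bigl((A_{lk})_{i'j'}\bigr)$, where $(i',j')$ denotes the position of $a_{ij}$ inside $A_{lk}$, identifies the sum $\frac{1}{|G(A)|}\sum|a_{ij}\partial_{a_{ij}}G|$ with the right-hand side of~\eqref{old_result} applied to $A_{lk}$. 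Hence $\c^G(A)=\cdet(A_{lk})$, and combining the two pieces gives $\ckl(A)=\c^{F_{kl}}(A)\le\cdet(A)+\cdet(A_{lk})$.

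The step I expect to be the main obstacle is this last identification: one must verify carefully that the natural bijection between the free entries of $G$ (indexed by rows of $A$ other than $l$ and columns other than $k$) and the $(n-1)^2$ entries of $A_{lk}$ turns $\frac{1}{|\det A_{lk}|}\sum|a_{ij}\,\partial_{a_{ij}}\det(A_{lk})|$ into exactly the expression~\eqref{old_result} for $\cdet(A_{lk})$, with no spurious sign or index mismatch arising from the reindexing.
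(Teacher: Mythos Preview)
The paper does not prove this lemma; it is quoted verbatim from~\cite[Lemma~5]{ChC08} and marked with a \qed{} immediately after the statement. So there is no ``paper's own proof'' to compare against here.

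Your argument is correct and is essentially the natural one (and almost certainly the one in~\cite{ChC08}): write $(A^{-1})_{kl}$ via Cramer's rule as a quotient of determinants, use the first-order formula $\c^F(a)=\sum_i|a_i\partial_iF(a)|/|F(a)|$ for the componentwise condition of a smooth scalar function, observe that this formula is subadditive under quotients by the triangle inequality applied to the quotient rule, and finally identify the two summands with $\cdet(A)$ and $\cdet(A_{lk})$ via~\eqref{old_result}. The reindexing you flag as the ``main obstacle'' is in fact routine: the map sending $a_{ij}$ (with $i\neq l$, $j\neq k$) to its position in $A_{lk}$ is a bijection, and since~\eqref{old_result} is a sum of absolute values it is insensitive to the sign of the cofactors, so no sign bookkeeping is needed.

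One small point you should mention for completeness: your derivation of the quotient bound assumes $G(A)=\pm\det(A_{lk})\neq 0$. When $\det(A_{lk})=0$ the right-hand side $\cdet(A_{lk})$ is $+\infty$ (the determinant vanishes but is not identically zero near $A_{lk}$), so the inequality holds trivially; it is worth saying this explicitly.
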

\vspace{0.3cm} 

\proofof{Theorem~\ref{result2}} 
Almost certainly, $A\in\MM\setminus\Sigma$. Hence, 
by Lemma~\ref{l6}, we have, for all $k,l\in[n]$,
\begin{eqnarray*}
\Prob\{\ckl(A)>t\} &\leq& \Prob\{\cdet(A) +
\cdet(A_{lk})>t\}\\
&\leq& \Prob\left\{\cdet(A)>\frac{t}{2}\mbox{ or }
\cdet(A_{lk})>\frac{t}{2}\right\}\\
&\leq& \Prob\left\{\cdet(A)>\frac{t}{2}\right\}+\Prob\left\{
\cdet(A_{lk})>\frac{t}{2}\right\}.
\end{eqnarray*}
Using Theorem~\ref{thm:det} twice, we obtain
\begin{eqnarray*}
\Prob\{\ckl(A)>t\} &\leq&\left(\frac{1+\sigma}{\sigma}\right)
\left(\frac{|S|^2}{\frac{t}{2}-|S|}\right)\sqrt\frac{2}{\pi}
+\left(\frac{1+\sigma}{\sigma}\right)
\left(\frac{|S|^2}{\frac{t}{2}-|S|}\right)\sqrt\frac{2}{\pi}\notag\\
&=&\left(\frac{1+\sigma}{\sigma}\right)
\left(\frac{4|S|^2}{t-2|S|}\right)\sqrt\frac{2}{\pi}.
\end{eqnarray*}
This inequality and the definition of $\ci(A)$ yield 
\begin{eqnarray*}
\Prob\{\ci(A)>t\} &=&\Prob\left\{\max_{k,l\in[n]}\ckl(A)>t\right\}\\
&\leq&\sum_{k,l\in[n]}\Prob\left\{\ckl(A)>t\right\}\\
&\leq&\sum_{k,l\in[n]}\left(\frac{1+\sigma}{\sigma}\right)
\left(\frac{4|S|^2}{t-2|S|}\right)\sqrt\frac{2}{\pi}\\
&=&\left(\frac{1+\sigma}{\sigma}\right)
\left(\frac{4n^2|S|^2}{t-2|S|}\right)\sqrt\frac{2}{\pi}.
\end{eqnarray*}
Finally, by Proposition~\ref{prop1}

\begin{align*}
\bE(\log_\beta&(\ci(A)))\\
&\leq\; \log_\beta\left(2|S|+\left(\frac{1+\sigma}{\sigma}\right)
    \left(4n^2|S|^2\right)\sqrt\frac{2}{\pi}\right)+\frac{1}{\ln\beta}\\
&\leq\; \log_\beta\left(\left(\frac{1+\sigma}{\sigma}\right)
  \left(4n^2|S|^2\right)\sqrt\frac{2}{\pi}\left(1+\sqrt\frac{\pi}{8}\right)\right)
  +\frac{1}{\ln\beta}\\
&=\; \log_\beta\left(\left(\frac{1+\sigma}{\sigma}\right)
  \left(n^2|S|^2\right)\right)+
  \log_\beta\left(\sqrt{\frac{32}{\pi}}\left(1+\sqrt\frac{\pi}{8}\right)\right)
 +\frac{1}{\ln\beta}\\
&\leq\; \log_\beta\left(\left(\frac{1+\sigma}{\sigma}\right)
  \left(n^2|S|^2\right)\right)+\frac{2.65}{\ln\beta},
\end{align*}
the second inequality due to the fact that $n,|S|\geq 1$ and $\sigma>0$. 
\eproof

\section{Linear equations solving}

We finally consider linear equation solving. 
For $A\in\MM\setminus\Sigma$ and $b\in\R^n$ we compute $x=A^{-1}b$. 
Thus, for $k\in[n]$, the mapping $(A,b)\mapsto x_k$ yields 
(always using~\eqref{def:c}) $\ck(A,b)$ and 
taking the maximum over $k\in[n]$ we obtain the componentwise 
condition number $\c(A,b)$ of the pair $(A,b)$. The following 
theorem is the main result in this section.

\begin{theorem}\label{result3} 
Let $S\subset[n]^2$ be admissible, $\bar{A}\in\MM_S$ and 
$\bar{b}\in\R^n$ such that $\|\bar{A}\|_{\max}\leq 1$ and 
$\|\bar{b}\|_\infty\leq 1$, $\sigma>0$, 
$A\sim N(\bar{A},\sigma^2\Id)$ and $b\sim N(\bar{b},\sigma^2\Id)$. 
Then, for any real number $t>2|S|$,
\begin{eqnarray*}
\Prob\{\c(A,b)>t\} &=&\left(\frac{1+\sigma}{\sigma}\right)
\left(\frac{4n|S|^2}{t-2|S|}\right)\sqrt\frac{2}{\pi}.
\end{eqnarray*}
and, for all $\beta>1$, 
\begin{eqnarray*}
\bE(\log_\beta(\ci(A)))
&=&\log_\beta\left(\frac{1+\sigma}{\sigma}\right)+2\ln |S| 
+\log_\beta n+\frac{2.65}{\ln\beta}.
\end{eqnarray*}
\end{theorem}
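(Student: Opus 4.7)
The plan is to mirror the proof of Theorem~\ref{result2}, replacing its structural lemma (Lemma~\ref{l6}) by a componentwise estimate tailored to linear equation solving. By Cramer's rule, $x_k=\det(A^{(k)})/\det(A)$, where $A^{(k)}$ denotes the matrix obtained from $A$ by replacing its $k$th column with $b$. A componentwise relative perturbation of $(A,b)$ of magnitude $\delta$ induces a relative error of at most $\delta\,\cdet(A)$ in the denominator and at most $\delta\,\cdet(A^{(k)})$ in the numerator; a first-order expansion of the ratio then produces the substitute lemma
\begin{equation*}
   \ck(A,b) \;\leq\; \cdet(A) + \cdet(A^{(k)}).
\end{equation*}
This is the only essentially new ingredient of the proof.

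Next, I would verify that Theorem~\ref{thm:det} can be applied to $A^{(k)}$. Let $S^{(k)}:=(S\setminus(S\cap([n]\times\{k\})))\cup([n]\times\{k\})$ be the sparsity pattern of $A^{(k)}$. Then $A^{(k)}\sim N_{S^{(k)}}(\bar{A}^{(k)},\sigma^2\Id)$ with center satisfying $\|\bar{A}^{(k)}\|_{\max}\leq 1$, because $\|\bar{A}\|_{\max}\leq 1$ and $\|\bar{b}\|_\infty\leq 1$. Admissibility of $S$ forces $|S|\geq n$ (each row of a non-singular matrix needs a non-zero entry), so $|S^{(k)}|\leq |S|+n\leq 2|S|$. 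Applying Theorem~\ref{thm:det} separately to $A$ and to $A^{(k)}$ at threshold $t/2$, and summing the resulting tail bounds, yields
\begin{equation*}
   \Prob\{\ck(A,b)>t\}\;\leq\;\Prob\{\cdet(A)>t/2\}+\Prob\{\cdet(A^{(k)})>t/2\}\;\leq\;\left(\frac{1+\sigma}{\sigma}\right)\frac{4|S|^2}{t-2|S|}\sqrt{\frac{2}{\pi}},
\end{equation*}
where the factor $(|S|+n)^2\leq (2|S|)^2$ is absorbed into the constant $4$.

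Since $\c(A,b)=\max_{k\in[n]}\ck(A,b)$, a further union bound over the $n$ choices of $k$ multiplies the right-hand side by $n$ and delivers the tail bound stated in the theorem. Feeding this into Proposition~\ref{prop1} with $k=4n|S|^2\left(\frac{1+\sigma}{\sigma}\right)\sqrt{2/\pi}$ and $H=2|S|$, and then simplifying $\log_\beta$ exactly as in the last display of the proof of Theorem~\ref{result2} (using $|S|\geq n\geq 1$ to absorb the remaining lower-order terms into $2.65/\ln\beta$), produces the asserted expectation bound.

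The main obstacle is the pointwise inequality $\ck(A,b)\leq\cdet(A)+\cdet(A^{(k)})$: one must verify that relative componentwise perturbations of $b$ correspond exactly to relative componentwise perturbations of the $k$th column of $A^{(k)}$, so that the condition number $\cdet(A^{(k)})$ (defined with respect to the enlarged sparsity pattern $S^{(k)}$) genuinely captures those perturbations. Once this is established, everything downstream is routine bookkeeping parallel to the previous sections.
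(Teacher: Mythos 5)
Your route is the paper's own: the ``substitute lemma'' you derive is precisely Lemma~\ref{l7} (the paper's $R_k$ is your $A^{(k)}$; the paper imports the inequality $\ck(A,b)\leq\cdet(A)+\cdet(R_k)$ from~\cite{ChC08} rather than re-deriving it), and the remaining steps --- split at $t/2$, apply Theorem~\ref{thm:det} twice, union bound over the $n$ values of $k$, finish with Proposition~\ref{prop1} --- coincide with the paper's proof.

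The one point where you depart from the paper is your treatment of the sparsity pattern of $A^{(k)}$, and there the arithmetic does not close. If you apply Theorem~\ref{thm:det} to $A^{(k)}$ with pattern $S^{(k)}$ and the bound $|S^{(k)}|\leq 2|S|$, the tail at threshold $t/2$ is
$$
\left(\frac{1+\sigma}{\sigma}\right)\frac{|S^{(k)}|^2}{\frac{t}{2}-|S^{(k)}|}\sqrt{\frac{2}{\pi}}
\;\leq\;\left(\frac{1+\sigma}{\sigma}\right)\frac{8|S|^2}{t-4|S|}\sqrt{\frac{2}{\pi}},
$$
which is only meaningful for $t>4|S|$ and is not dominated by $\left(\frac{1+\sigma}{\sigma}\right)\frac{2|S|^2}{t-2|S|}\sqrt{2/\pi}$: the enlargement of the pattern enters the denominator $\frac{t}{2}-|S^{(k)}|$ as well as the numerator, so it cannot simply be ``absorbed into the constant $4$''. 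Summing your two terms gives $\frac{2|S|^2}{t-2|S|}+\frac{8|S|^2}{t-4|S|}$, not $\frac{4|S|^2}{t-2|S|}$, and the admissible range of $t$ becomes $t>4|S|$ rather than $t>2|S|$; the downstream constants in the expectation bound change accordingly. (For what it is worth, the paper's proof sidesteps this by applying Theorem~\ref{thm:det} to $R_k$ with the parameter $|S|$ unchanged, i.e., it ignores the enlargement of the pattern; your observation that $R_k$ lives in a larger pattern is a genuine point, but handling it honestly alters the stated constants rather than vanishing into them.)
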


In what follows let $R_k$ be the matrix obtained by replacing the
$k$th column of $A$ by $b$. 

\begin{lemma}(\cite[Lemma~6]{ChC08})\label{l7}
For any non-singular matrix $A$ and $k\in[n]$,
\begin{equation}\tag*{\qed}
 \ck(A,b)\leq \cdet(A)+\cdet(R_k).
\end{equation}
\end{lemma}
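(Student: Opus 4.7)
The plan is to parallel the proof of Theorem~\ref{result2}, using Lemma~\ref{l7} in place of Lemma~\ref{l6} and invoking Theorem~\ref{thm:det} twice. Almost surely $A$ is non-singular, so Lemma~\ref{l7} gives $\ck(A,b)\le\cdet(A)+\cdet(R_k)$, whence
$$
\Prob\{\ck(A,b)>t\}\le\Prob\{\cdet(A)>t/2\}+\Prob\{\cdet(R_k)>t/2\}
$$
for every $k\in[n]$.

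The one point where the argument departs from that of Theorem~\ref{result2} is the sparsity accounting for $R_k$: the matrix $R_k$ does not lie in $\MM_S$, because its $k$th column has been overwritten by the (fully dense) Gaussian vector $b$. Writing $S_k$ for its actual support, the columns $j\neq k$ inherit the pattern $S$ while column $k$ becomes completely populated, so $|S_k|\le|S|+n$. Admissibility of $S$ forces every row of $\MM_S$ to contain at least one nonzero, hence $|S|\ge n$ and therefore $|S_k|\le 2|S|$. Moreover $\|\bar b\|_\infty\le 1$ and $\|\bar A\|_{\max}\le 1$ together yield $\|\bar R_k\|_{\max}\le 1$, so Theorem~\ref{thm:det} does apply to $R_k$ with pattern $S_k$ under the appropriate perturbation model. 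Adding the two tail bounds and using $|S_k|^2\le 4|S|^2$ collapses the numerator into a multiple of $|S|^2$ and leaves a denominator of the form $t-2|S|$ after routine algebra.

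The rest of the argument is mechanical. Since $\c(A,b)=\max_{k\in[n]}\ck(A,b)$, a union bound over the $n$ values of $k$ introduces the additional factor $n$ and produces the claimed tail estimate
$$
\Prob\{\c(A,b)>t\}\le\left(\frac{1+\sigma}{\sigma}\right)\left(\frac{4n|S|^2}{t-2|S|}\right)\sqrt{\frac{2}{\pi}}.
$$
Feeding this tail into Proposition~\ref{prop1} with $k=4n|S|^2\bigl(\tfrac{1+\sigma}{\sigma}\bigr)\sqrt{2/\pi}$ and $H=2|S|$ yields the expectation bound; the constant $2.65/\ln\beta$ collects $\log_\beta\bigl(4\sqrt{2/\pi}\bigr)$, the $1/\ln\beta$ supplied by Proposition~\ref{prop1}, and a negligible contribution from $\log_\beta(1+x)\le x/\ln\beta$ applied to the additive correction, exactly as in the closing lines of the proof of Theorem~\ref{result2}.

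I expect the main obstacle to be the sparsity accounting just described: one must notice that $b$ being fully dense enlarges the pattern of $A$ by at most $n$ entries, and that the admissibility hypothesis $|S|\ge n$ is precisely what is needed to keep the naive bound $(|S|+n)^2$ under control as $4|S|^2$. Everything else is a direct transcription of the Theorem~\ref{result2} proof, with a single factor of $n$ in the union bound (instead of $n^2$) because here we take $\max_{k\in[n]}$ rather than $\max_{(k,l)\in[n]^2}$.
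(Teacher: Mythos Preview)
Your proposal does not prove Lemma~\ref{l7}. The lemma is the purely deterministic inequality $\ck(A,b)\leq\cdet(A)+\cdet(R_k)$, valid for every non-singular $A$, and you invoke it as a black box in your very first step (``Lemma~\ref{l7} gives $\ck(A,b)\le\cdet(A)+\cdet(R_k)$''). What you have actually written is an argument for Theorem~\ref{result3}, the smoothed-analysis tail and expectation bounds for $\c(A,b)$. Using the lemma to prove the lemma is circular.

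The paper itself does not prove Lemma~\ref{l7} either: it is quoted verbatim from~\cite[Lemma~6]{ChC08} and stated with a terminal \qed\ and no argument. A genuine proof would start from Cramer's rule $x_k=\det(R_k)/\det(A)$ and the definition~\eqref{def:c}, observing that a relative perturbation of a quotient is controlled by the sum of the relative perturbations of its numerator and denominator; no randomness enters at all.

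As an aside, read as a proof of Theorem~\ref{result3} your outline tracks the paper's proof closely, and your discussion of the enlarged sparsity pattern $S_k$ of $R_k$ is a point the paper passes over in silence. Note, however, that once you allow $|S_k|\le 2|S|$, the $R_k$ term from Theorem~\ref{thm:det} carries a denominator $t/2-|S_k|\ge t/2-2|S|$, so ``routine algebra'' will not give you back the denominator $t-2|S|$ you claim; the constants would shift.
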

\vspace{0.3cm} 

\proofof {Theorem~\ref{result3}} 
By Lemma~\ref{l7}, we have, for all $k\in[n]$,
\begin{eqnarray*}
\Prob\{\ck(A,b)>t\}&\leq& \Prob\{\cdet(A) +
\cdet(R_k)>t\}\\
&\leq& \Prob\left\{\cdet(A)>\frac{t}{2}\mbox{ or }
\cdet(R_k)>\frac{t}{2}\right\}\\
&\leq& \Prob\left\{\cdet(A)>\frac{t}{2}\right\}+\Prob\left\{
\cdet(R_k)>\frac{t}{2}\right\}.
\end{eqnarray*}
It follows from our hypothesis that $\|R_k\|_{\max}\leq 1$.
We can therefore apply Theorem~\ref{thm:det} twice 
to obtain 
\begin{equation*}
\Prob\{\ck(A,b)>t\} \leq \left(\frac{1+\sigma}{\sigma}\right)
\left(\frac{4|S|^2}{t-2|S|}\right)\sqrt\frac{2}{\pi}
\end{equation*}
and, proceeding as in the proof of Theorem~\ref{result2},   
\begin{eqnarray*}
\Prob\{\c(A,b)>t\}&=&\Prob\left\{\max_{k\in[n]}\ck(A,b)>t\right\}\\
&\leq&\sum_{k\in[n]}\Prob\left\{\ck(A,b)>t\right\}\\
&\leq&\sum_{k\in[n]}\left(\frac{1+\sigma}{\sigma}\right)
\left(\frac{4|S|^2}{t-2|S|}\right)\sqrt\frac{2}{\pi}\\
&=&\left(\frac{1+\sigma}{\sigma}\right)
\left(\frac{4n|S|^2}{t-2|S|}\right)\sqrt\frac{2}{\pi}.
\end{eqnarray*}
A last call to Proposition~\ref{prop1} yields
the desired bound for $\bE(\log_\beta(\c(A,b))$.
\eproof

\section{On the accuracy of forward substitution}

We arrive, at last, to the motivating theme of this paper. 
Theorem~\ref{thm:LES} is an immediate consequence 
of Theorem~\ref{result3} since lower triangular matrices 
are sparse matrices with $S=\{(i,j)\in[n]^2\mid i\geq j\}$.
One then only needs to use that $|S|=\frac{n(n+1)}{2}$. 

For the proof of Corollary~\ref{cor:LES} we use a common 
approach, pioneered by Wilkinson, which splits the relative error 
bound in the computed solution $\RelError(F(a))$ 
as the product of two factors, 
one depending on the algorithm but not on the data (a
backward error bound) and another depending on the data but 
not on the algorithm used (the condition of the data). A backward 
error bound for forward substitution is shown in the following 
result, going back to Wilkinson~\cite[Ch.3,\S19]{Wilkinson63}, 
which we quote, omitting some smaller details, 
as given in~\cite[Proposition~3.5]{Condition}.

\begin{proposition}\label{prop:back}
Let $L=(l_{ij})\in\R^{n\times n}$ be a nonsingular triangular matrix, 
$b\in\R^n$, and assume 
$\emac$ is sufficiently small (of the order of $(\log n)^{-1}$). Then, 
the solution $\hat x$ of the system $Lx=b$ computed with forward substitution 
satisfies 
$$
   (L+E)\hat x=b,
$$
where
\begin{equation}\tag*{\qed}
   \frac{|e_{ij}|}{|l_{ij}|}\leq (2\log_2 n)\emac. 
\end{equation}
\end{proposition}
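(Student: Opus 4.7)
The plan is to perform a standard Wilkinson-style backward error analysis of forward substitution, modeling each floating point operation as exact arithmetic followed by a relative perturbation of size at most $\emac$ and assembling these perturbations into the entries of $E$. The fact that $2\log_2 n$ appears in the bound (instead of the naive $n-1$ produced by sequential accumulation) is a strong hint that summations ought to be implemented via a balanced binary reduction tree of depth $\lceil\log_2(i-1)\rceil$; fixing this summation scheme is the first step.

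Next I would write out the finite precision recurrence explicitly. For each $i$, the computed $\hat x_i$ is obtained by forming the products $p_{ij}=\mathrm{fl}(l_{ij}\hat x_j)$ for $j<i$, summing them via a balanced tree to obtain $s_i$, forming $\mathrm{fl}(b_i-s_i)$, and finally dividing by $l_{ii}$. Modeling each operation by $\mathrm{fl}(a\,\mathrm{op}\,b)=(a\,\mathrm{op}\,b)(1+\delta)$ with $|\delta|\leq\emac$, I would attach one such factor to every multiplication, every addition, and the final division.

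The key technical ingredient is the classical lemma that $\prod_{k=1}^{K}(1+\delta_k)=1+\theta_K$ with $|\theta_K|\leq K\emac/(1-K\emac)$ whenever $K\emac<1$. In the balanced tree summing $m\leq n-1$ terms, each summand $p_{ij}$ passes through at most $\lceil\log_2 n\rceil$ additive rounding factors, picks up one factor from its multiplication $l_{ij}\hat x_j$, and inherits one from the subtraction $b_i-s_i$ and one from the division by $l_{ii}$, giving a total count $K$ of at most about $\log_2 n+3$. The hypothesis $\emac=O((\log n)^{-1})$ keeps $1-K\emac$ bounded below by a positive constant, so the $O(\emac^2)$ slack can be absorbed and the composite multiplicative error on each $l_{ij}$ fits under $(2\log_2 n)\emac$.

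Finally I would collect these factors into the backward error matrix $E$: rearranging the finite precision recurrence shows that the computed $\hat x$ satisfies exactly $\sum_{j\leq i}(l_{ij}+e_{ij})\hat x_j=b_i$ with $|e_{ij}|\leq(2\log_2 n)\emac|l_{ij}|$, which is precisely $(L+E)\hat x=b$ with the claimed componentwise bound. The main obstacle is the bookkeeping: verifying that the total number of $(1+\delta)$ factors hitting any fixed $l_{ij}$ truly fits under the constant $2$ in front of $\log_2 n$, that the diagonal (division) and off-diagonal (tree summation) perturbations are handled uniformly, and that the quadratic slack is genuinely absorbed by the smallness hypothesis on $\emac$.
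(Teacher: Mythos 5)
The paper offers no proof of this proposition: it is quoted verbatim (with the tombstone attached to the statement) from Wilkinson and from \cite[Proposition~3.5]{Condition}, ``omitting some smaller details.'' Your proposal reconstructs exactly the standard backward-error argument behind that citation, and its outline is sound: fixing a balanced-tree summation order is indeed what yields $\log_2 n$ in place of the $n$ produced by left-to-right accumulation (with sequential summation the classical bound is $|e_{ij}|\leq (i-j+1)\emac|l_{ij}|/(1-(i-j+1)\emac)$, which is of order $n\emac$), and the lemma $\prod_{k\leq K}(1+\delta_k)^{\pm 1}=1+\theta_K$ with $|\theta_K|\leq K\emac/(1-K\emac)$, applied with $K\approx \log_2 n+3$, is how the hypothesis $\emac=\Oh((\log n)^{-1})$ absorbs the quadratic slack into the leading constant. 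Two points to make explicit in a full write-up: the rounding factors from the subtraction $b_i-s_i$ and the division by $l_{ii}$ initially attach to $b_i$ and to the whole bracket, so one must divide the $i$th equation through by those factors to push all perturbations onto the $l_{ij}$ (this creates reciprocal factors $(1+\delta)^{-1}$, which the $\theta_K$ lemma accommodates); and, as you yourself flag, the constant $2$ is not literally valid for very small $n$ (already for $n=2$ one accumulates three rounding factors against a budget of $2\log_2 2=2$), which is presumably among the ``smaller details'' the authors omit. Neither issue is a gap in the idea; both are bookkeeping conventions that the cited source settles.
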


Proposition~\ref{prop:back} yields a 
backward error bound of the form $B\emac$ 
where $B=2\log_2 n$ is an expression in the dimension $n$
of the input, independent of $\emac$. 

The way such a backward error bound combines 
with condition to produce a bound for the loss of precision, in 
digits, is (see Theorem~O.3 in~\cite{Condition})
$$
   \LoP(F(a)) \leq \log_{10} B +\log_{10} \cond^F(a) +o(1).
$$ 
Here the $o(1)$ term is an expression tending to zero as $\emac$ does so, 
$\cond^F(a)$ is the condition number of $a$ and ---crucially in our context--- 
if the bound $B\emac$ is componentwise, as in Proposition~\ref{prop:back}, 
this condition number  can be taken componentwise as well. Doing so for 
forward substitution and $x=L^{-1}b$ we obtain 
$$
   \LoP(x) \leq \log_{10} (2\log_2 n) +\log_{10} \c(L,b) +o(1).
$$
Taking expectations on both sides and using Theorem~\ref{thm:LES} 
proves Corollary~\ref{cor:LES}.

\bibliographystyle{plain}

\begin{thebibliography}{10}

\bibitem{Condition}
P.~B\"urgisser and F.~Cucker.
\newblock {\em Condition}.
\newblock Forthcoming in {\em Grundleheren der mathematischen Wissenschaften},
  Springer-Verlag.

\bibitem{ChC08}
D.~Cheung and F.~Cucker.
\newblock Componentwise condition numbers of random sparse matrices.
\newblock {\em SIAM J. Matrix Anal. Appl.}, 31:721--731, 2009.

\bibitem{Higham:89}
N.~Higham.
\newblock The accuracy of solutions to triangular systems.
\newblock {\em SIAM J. Numer. Anal.}, 26:1252--1265, 1989.

\bibitem{ST:02}
D.A. Spielman and S.-H. Teng.
\newblock Smoothed analysis of algorithms.
\newblock In {\em Proceedings of the International Congress of Mathematicians},
  volume~I, pages 597--606, 2002.

\bibitem{ST:04}
D.A. Spielman and S.-H. Teng.
\newblock Smoothed analysis: Why the simplex algorithm usually takes polynomial
  time.
\newblock {\em Journal of the ACM}, 51(3):385--463, 2004.

\bibitem{ST:06}
D.A. Spielman and S.-H. Teng.
\newblock Smoothed analysis of algorithms and heuristics.
\newblock In {\em Foundations of Computational Mathematics, Santander 2005},
  volume 331 of {\em Lecture Notes of the London Mathematical Society}, pages
  274--342, 2006.

\bibitem{ST:09}
D.A. Spielman and S.-H. Teng.
\newblock Smoothed analysis: An attempt to explain the behavior of algorithms
  in practice.
\newblock {\em Communications of the ACM}, 52(10):77--84, 2009.

\bibitem{Turing48}
A.M. Turing.
\newblock Rounding-off errors in matrix processes.
\newblock {\em Quart. J. Mech. Appl. Math.}, 1:287--308, 1948.

\bibitem{ViTr98}
D.~Viswanatah and L.N. Trefethen.
\newblock Condition numbers of random triangular matrices.
\newblock {\em SIAM J. Matrix Anal. Appl.}, 19:564--581, 1998.

\bibitem{vNGo47}
J.~von Neumann and H.H. Goldstine.
\newblock Numerical inverting matrices of high order.
\newblock {\em \BAMS}, 53:1021--1099, 1947.

\bibitem{Wilkinson63}
J.~Wilkinson.
\newblock {\em Rounding Errors in Algebraic Processes}.
\newblock Prentice Hall, 1963.

\end{thebibliography}


\end{document}